\newcommand{\D}{\mathcal D}
\newcommand{\F}{\mathcal F}
\newcommand{\lcm}{\mathrm {lcm}}
\newcommand{\legendre}[2]{{\genfrac{(}{)}{1pt}{}{#1}{#2}}}
\newtheorem{theorem}{Theorem}[section]
\newtheorem{lemma}[theorem]{Lemma}
\newtheorem{conjecture}[theorem]{Conjecture}
\newtheorem{proposition}[theorem]{Proposition}
\newtheorem*{RRSP}{First Rogers--Ramanujan identity (series--product form)}
\newtheorem*{RRC}{First Rogers--Ramanujan identity (combinatorial form)}
\newtheorem*{RR2SP}{Second Rogers--Ramanujan identity (series--product form)}
\newtheorem*{RR2C}{Second Rogers--Ramanujan identity (combinatorial form)}
\newtheorem*{EOD}{Euler's partition theorem}
\theoremstyle{definition}
\theoremstyle{remark}
\newtheorem{remark}[theorem]{Remark}
\numberwithin{equation}{section}
\begin{document}

%
%
%
%
%
%
%
%
%

\title[A formula for the partition function that ``counts"]{A formula for the partition function\\ that ``counts"}


\author{Yuriy Choliy}
\address{%
Department of Chemistry and Chemical Biology\\
Rutgers University New Brunswick---Busch Campus\\
610 Taylor Road\\
Piscataway, NJ 08854, USA}
\curraddr{}
\email{yurkocholiy@hotmail.com}

\author{Andrew V. Sills}
\address{Department of Mathematical Sciences\\
Georgia Southern University\\
Statesboro, GA 30458, USA\\
voice 1-912-478-5424\\
fax 1-912-478-0654}
\curraddr{}
\email{ASills@georgiasouthern.edu}
\thanks{The work of the second author is partially supported by National Security Agency Grant P13217-39G3217}

\subjclass{Primary 05A17 Secondary 11P81}

\keywords{integer partitions, partition function, Durfee square}

\date{\today}

\begin{abstract}
We derive a combinatorial multisum expression for the number $D(n,k)$ of partitions of $n$ with Durfee square of order $k$.  An immediate corollary is therefore a combinatorial formula for $p(n)$, the number of partitions of $n$.  We then study $D(n,k)$ as a quasipolynomial.  We consider the natural polynomial approximation $\tilde{D}(n,k)$ to the quasipolynomial representation of $D(n,k)$.  
Numerically, the sum $\sum_{1\leq k \leq \sqrt{n}} \tilde{D}(n,k)$ appears to be
extremely close to the initial term of the Hardy--Ramanujan--Rademacher convergent series for $p(n)$.
\end{abstract}

\maketitle

\section{Introduction}
\subsection{Preliminaries: Definitions and Notation}
A \emph{partition} $\lambda$ of an integer $n$ is a nonincreasing finite sequence of positive integers 
$(\lambda_1, \lambda_2, \lambda_3, \dots, \lambda_{\ell})$ that sum to $n$.  Each $\lambda_j$ is called a \emph{part} of $\lambda$.   The number of parts $\ell=\ell(\lambda)$ is the \emph{length} of $\lambda$. The number of times $m_j = m_j(\lambda)$ the positive integer $j$ appears as a part in $\lambda$ is the \emph{multiplicity} of $j$ in $\lambda$.  Sometimes it is convenient to notate the partition $\lambda$ as $\lambda = \langle 1^{m_1} 2^{m_2} 3^{m_3} \cdots \rangle$, with the convention that the superscript $m_j$ may be omitted if $m_j=1$ for that $j$, and $j^{m_j}$ may be omitted if $m_j=0$ for that $j$.  Thus, e.g., $(6,4,3,3,1,1,1,1) = \langle 1^4 3^2 4\ 6 \rangle$.
Also, the sum of the parts of $\lambda$ is denoted $|\lambda|$ and is called the \emph{weight} of
$\lambda$.  

It will be convenient to define the \emph{union} $\lambda\cup\mu$ 
of two partitions $\lambda$ and $\mu$ as
  \[ \lambda\cup\mu = \langle 1^{m_1(\lambda)+ m_1(\mu)} 2^{m_2(\lambda) + m_2(\mu)} 
  3^{m_3(\lambda) + m_3(\mu)} \cdots \rangle. \]
It is well known that the partition $\lambda$ can be represented graphically by an arrangement of 
$\ell(\lambda)$ rows of left-justified dots, with $\lambda_j$ dots in the $j$th row.  Such a collection of dots is called the \emph{Ferrers graph}~\cite[p. 6]{A76} of $\lambda$.  For example, the Ferrers graph associated with the partition
$(6,4,3,3,1,1,1,1)$ is as follows.
\[ \begin{array} {cccccc} 
{\bullet} & \bullet & \bullet & \cdot & \cdot &\cdot \\
\bullet & \bullet& \bullet & \cdot & \\
\bullet & \bullet  & \bullet \\
\cdot & \cdot & \cdot \\
\cdot \\
\cdot \\
\cdot \\
\cdot 
\end{array}
\]

The largest square starting from the upper left, and contained within the Ferrers graph is called the \emph{Durfee square} of $\lambda$~\cite[p. 28]{A76}, \cite[p. 76]{AE04} (indicated above by the darker dots in the the Ferrers graph).   Let us call the number of nodes on a side (or equivalently the number of nodes along the main diagonal) of a Durfee square the \emph{order} of the Durfee square.  

   As it turns out, the Durfee square is of interest outside the theory of partitions.  For example, the
order of the Durfee square is in fact 
equivalent to the \emph{h-index} introduced by Hirsch~\cite{H05} as a metric that attempts to measure both the productivity and impact of a scholar.  If a scholar has published $\ell$ papers that have been cited in the literature at least once each, 
 and the number of citations of the $j$th paper is 
  $\lambda_j$, then $\lambda=(\lambda_1, \lambda_2, \dots, \lambda_\ell)$ is a partition of $n$ (once the $\lambda_j$ are arranged in nonincreasing order), where $n$ is the total number of citations.  The h-index for this author is therefore the order of the Durfee square of $\lambda$.

Since we will be discussing asymptotics later, let us adopt the convention throughout that $q$ represents
a complex variable with modulus less than $1$ (in some cases $q$ may be considered a
formal variable).
Let $D(n,k)$ denote the number of partitions of $n$ with Durfee square of order $k$. 
It is well known~\cite[p. 28]{A76} that the generating function for $D(n,k)$ is 
\begin{equation} \label{DnkGF}
\sum_{n\geq k^2} D(n,k) q^n = \frac{q^{k^2}}{(1-q)^2 (1-q^2)^2 (1-q^3)^2\cdots (1-q^k)^2}. 
\end{equation}

Associated with each partition $\lambda = (\lambda_1, \lambda_2, \dots, \lambda_\ell)$ of $n$ is the \emph{conjugate} partition $\lambda' = 
(\lambda'_1, \lambda'_2, \dots \lambda'_{\lambda_1} ) =\langle 1^{\lambda_1-\lambda_2} 2^{\lambda_2-\lambda_3}
3^{\lambda_3-\lambda_4}\cdots (\ell-1)^{\lambda_{\ell-1}-\lambda_\ell} \ell^{\lambda_\ell} \rangle$ of
$n$ which may be obtained from $\lambda$ by interchanging the rows and columns of the Ferrers graph of $\lambda$.  

  The \emph{Frobenius symbol} of $\lambda$ with Durfee square of order $k$ is the $2\times k$ matrix
\[ \left[ \begin{array}{ccccc}  
a_1 & a_2 & a_3 & \cdots & a_k \\
b_1 & b_2 & b_3 & \cdots & b_k 
\end{array}
  \right], \]
 where $a_j = \lambda_j - j$ and $b_j = \lambda'_j - j$ for $j=1,2,3,\dots, k$.
 Notice that $a_1 > a_2 > \cdots> a_k$, $b_1>b_2>\cdots >b_k$, and $n= k+\sum_{j=1}^k (a_j + b_j )$.
 
  Thus $D(n,k)$ also counts the number of Frobenius symbols with exactly $k$ columns whose entries sum to $n-k$.

\subsection{The partition function}
  The \emph{partition function}, denoted $p(n)$, is the number of partitions of $n$. 
The first exact formula for $p(n)$, which was derived using the theory of modular forms, was given by Hardy and Ramanujan~\cite{HR18} in 1918.  Two decades later, Rademacher~\cite{HR38} modified the 
derivation of the Hardy--Ramanujan formula and as a result, produced an infinite series that converges to $p(n)$:
\begin{equation} \label{HRR}
p(n) = \frac{1}{\pi\sqrt{2}} \sum_{k\geq 1} \sqrt{k} A_k(n) \frac{d}{dn} \left(
  \frac{\sinh\left( \frac{\pi}{k} \sqrt{\frac 23 (n-\frac{1}{24}) } \right)}{\sqrt{n-\frac{1}{24}}} \right), 
\end{equation}
where the Kloosterman-type sum
\[ A_k(n) := \underset{\gcd(h,k)=1}{\sum_{0\leq h<k}} \omega(h,k) e^{-2\pi i n h/k}, \]
and the $24k$th root of unity
\begin{multline*} \omega(h,k) \\
:= \left\{ 
\begin{array}{ll}
  \legendre{-k}{h} \exp\Big(  \frac{\pi i}{12}\left( {3(2-hk-h)+(k-k^{-1})(2h-h'+h^2h')} \right) \Big)  & \mbox{if $2\nmid h$} \\
  \legendre{-h}{k} \exp\Big( \frac{\pi i}{12} \left( {3(k-1) + (k-k^{-1})(2h-h'+h^2h')} \right) \Big) & \mbox{if $2\nmid k$}
\end{array}
  \right.  ,\end{multline*}
with $\legendre{a}{b}$ denoting the Legendre symbol, and $h'$ denoting any solution to
the congruence $hh'\equiv -1\pmod k$.

  In 2011, Ono and Bruinier~\cite{BO13} announced a new formula for $p(n)$ as a finite sum of algebraic integers that are singular moduli for a certain weak Maass form described using Dedekind's eta function and the quasimodular Eisenstein series $E_2$.  Thus the known formulas for $p(n)$ are not combinatorial in nature and require deep complex function theory for their derivation.  
  
  One goal of this present paper is to present the following combinatorial formula for $p(n)$:
\begin{equation} \label{PofN}
p(n) =   \sum_{k=0}^{\lfloor \sqrt{n} \rfloor} D(n,k),
\end{equation}
where $D(n,k)$ is given by the following $(k-1)$-fold sum of terms, each of which is a positive integer:
\begin{equation} \label{DnkMultisum}
D(n,k) = \sum_{m_k=0}^{U_k} \sum_{m_{k-1}=0}^{U_{k-1}} \cdots \sum_{m_2=0}^{U_2}
 \left( 1+n-k^2 - \sum_{h=2}^k h m_h \right) \prod_{i=2}^k (m_i + 1),
\end{equation}
where 
\begin{equation} \label{Uj}
 U_j := U_j(n,k) = \left\lfloor \frac{ n-k^2 - \sum_{h=j+1}^k h m_h}{j} \right\rfloor,
 \end{equation}
for $j = 2,3,4, \dots, k.$

\subsection{Overview}
After proving Equation~\eqref{DnkMultisum}, we study various aspects of $D(n,k)$.
We 
find leading coefficients of the generating function of $D(n,k)$, examine representation of $D(n,k)$ as the sum of quasipolynomials, and suggest a polynomial approximation for $p(n)$ which gives a value very close to that of the first term of Rademacher's convergent series representation of $p(n)$. 
We present several combinatorial expressions related to our multisum formula for $D(n,k)$,
which may be obtained in a straightforward manner by suitable modification of the main result.
 Finally, we compare our multisum representation of $D(n,k)$ with the quasipolynomial representation.

\section{Multisum representation of $D(n,k)$} 

We begin with an elementary lemma for which we provide two proofs.
\begin{lemma} \label{L1}
\begin{equation} \label{DnkPtn}
  D(n,k) = \sum_{\lambda\in\mathcal{P}_{n-k^2,k}} \prod_{i=1}^k \Big( m_i(\lambda) +1 \Big),
\end{equation}
where $\mathcal{P}_{n,r}$ denotes the set of all partitions of weight $n$ in which no part exceeds $r$.
 \end{lemma}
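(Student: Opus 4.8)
The plan is to give two independent proofs, both resting on the standard Durfee-square decomposition of a partition together with the elementary generating-function identity $\sum_{m\geq 0}(m+1)x^m = (1-x)^{-2}$. In each case the goal is to show that the right-hand side of \eqref{DnkPtn} produces exactly the coefficient extracted from the known generating function \eqref{DnkGF}.

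For the \emph{generating-function proof}, I would first observe that a partition $\lambda$ with all parts at most $k$ is completely specified by its multiplicity vector $(m_1,\dots,m_k)$ with each $m_i\geq 0$, and that $|\lambda| = \sum_{i=1}^k i\,m_i$. Hence the weighted sum over all such $\lambda$ factors as a product over part-sizes:
\begin{equation*}
\sum_{\lambda} \Big(\prod_{i=1}^k (m_i(\lambda)+1)\Big) q^{|\lambda|}
 = \prod_{i=1}^k \Big( \sum_{m_i\geq 0} (m_i+1)\, q^{i m_i}\Big)
 = \prod_{i=1}^k \frac{1}{(1-q^i)^2},
\end{equation*}
where the last equality applies the identity $\sum_{m\geq 0}(m+1)x^m=(1-x)^{-2}$ with $x=q^i$. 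Extracting the coefficient of $q^{\,n-k^2}$ on both sides and comparing with \eqref{DnkGF}, whose right-hand side is $q^{k^2}$ times this very product, then yields $D(n,k) = \sum_{\lambda\in\mathcal{P}_{n-k^2,k}} \prod_{i=1}^k (m_i(\lambda)+1)$, as claimed.

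For the \emph{combinatorial proof}, I would decompose any partition counted by $D(n,k)$ into its order-$k$ Durfee square (contributing $k^2$ to the weight), the partition $\alpha$ lying to the right of the square (which has at most $k$ rows since only the first $k$ rows meet the square), and the partition $\beta$ lying below it (whose parts are at most $k$, because $\lambda_{k+1}\leq k$). Conjugating $\alpha$ produces a partition $\alpha'$ whose parts are likewise at most $k$, so that $D(n,k)$ counts ordered pairs $(\alpha',\beta)$ of partitions with parts $\leq k$ satisfying $|\alpha'|+|\beta| = n-k^2$. Grouping these pairs by their union $\lambda = \alpha'\cup\beta\in\mathcal{P}_{n-k^2,k}$, I would then count the splittings: for each part-size $i$, the $m_i(\lambda)$ copies of $i$ may be apportioned to $\alpha'$ in any of $m_i(\lambda)+1$ ways (anywhere from $0$ up to $m_i(\lambda)$ of them), with the remainder assigned to $\beta$. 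Multiplying over $i$ gives $\prod_{i=1}^k(m_i(\lambda)+1)$ ordered pairs for each fixed union, and summing over $\lambda$ reproduces \eqref{DnkPtn}.

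I expect the only genuine subtlety to lie in the combinatorial proof, namely in justifying the Durfee-square decomposition and the conjugation step carefully enough that $\alpha'$ and $\beta$ are seen to range independently over all partitions with parts at most $k$; once this decomposition is secured, the factor $\prod_i(m_i+1)$ emerges immediately from the independent choice of how to split each multiplicity. The generating-function argument, by contrast, is entirely routine and reduces essentially to the single summation identity displayed above.
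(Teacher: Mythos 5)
Your proposal is correct and matches the paper's own treatment almost exactly: the paper likewise gives two proofs, a generating-function proof that expands each factor $(1-q^i)^{-2}$ as a binomial series (the same identity $\sum_{m\geq 0}(m+1)x^m=(1-x)^{-2}$ you use, merely run in the product-to-sum direction), and a combinatorial proof via the Durfee-square decomposition into the square $\delta$, the partition $\beta$ below it, and the conjugated partition $\rho$ to its right, with the factor $\prod_i (m_i+1)$ arising from the ways to split each multiplicity of $\beta\cup\rho$ between the two pieces. The subtlety you flag (that the two pieces range independently over all partitions with parts at most $k$) is treated just as briefly in the paper.
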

\begin{proof}[Generating function proof]
The truth of~\eqref{DnkPtn} follows immediately from expanding each factor in the denominator
of the generating function of $D(n,k)$ as a binomial series, and then extracting the coefficient of
$q^n$ in the extremes:
\begin{align*}
&\phantom{=}\sum_{n\geq k^2} D(n,k) q^n\\ &= \frac{q^{k^2}}{(1-q)^2 (1-q^2)^2 \cdots (1-q^k)^2} \\
&= \sum_{m_1, m_2, \dots, m_k\geq 0} (m_1+1)(m_2+1)\cdots (m_k+1) q^{k^2+m_1 + 2m_2 + \cdots
+ k m_k}.
\end{align*} 
\end{proof}
\begin{proof}[Combinatorial proof]
Fix $n$ and $k$.  An arbitrary partition $\lambda$ of $n$ with Durfee square of order $k$ may be
decomposed, via its Ferrers graph, into a triple of partitions $(\delta, \beta, \rho)$ where
$\delta = \langle k^k \rangle$, $\beta = ( \lambda_{k+1}, \lambda_{k+2}, \dots , \lambda_{\ell(\lambda)})$, and $\rho = (\lambda'_{k+1}, \lambda'_{k+2}, \dots, \lambda'_{\ell(\lambda')})$.  Thus $\delta$ is
the Durfee square, $\beta$ consists of the parts of $\lambda$ below the Durfee square, and $\rho$ consists of the parts to the right of
the Durfee square formed vertically, i.e. parts below the Durfee square in the conjugate of $\lambda$.
(For example, if $\lambda=(64331111)$, then $(\delta,\beta,\rho) = (333, 31111, 211)$.)

 The subpartition $\delta$ is the same for all partitions $\lambda$ enumerated by $D(n,k)$.  Clearly, 
$\beta \cup\rho$ is a partition of weight $n-k^2$ in which no part exceeds $k$.  
We visualize the Durfee square of order $k$ fixed in place.
The multiplicity $m_k(\beta\cup\rho)$ of $k$'s can be placed in $m_k(\beta\cup\rho)+1$
different ways into the Ferrers diagram of $\lambda$:
all in $\beta$ and none in $\rho$, all but one in $\beta$ and one in $\rho$,
\dots.  For every possible placement of the $k$'s among $\beta$ and $\rho$, there are, by the same reasoning, $m_{k-1}(\beta\cup\rho)+1$ possible ways to place the $k-1$'s among $\beta$ and $\rho$.
And so on for all integers from $k-2$ down to $1$.
\end{proof}

 \begin{theorem}
   Equation~\eqref{DnkMultisum} is valid.
 \end{theorem}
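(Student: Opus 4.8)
The plan is to start from Lemma~\ref{L1} and convert the sum over partitions into the explicit nested sum over multiplicities. First I would observe that a partition $\lambda\in\mathcal{P}_{n-k^2,k}$ is completely specified by its multiplicity vector $(m_1,m_2,\dots,m_k)$ of the parts $1,2,\dots,k$, subject to $m_i\geq 0$ and the weight condition $\sum_{i=1}^k i\,m_i = n-k^2$. Rewriting~\eqref{DnkPtn} in these terms gives
\[
D(n,k) = \sum_{\substack{m_1,\dots,m_k\geq 0\\ \sum_{i=1}^k i m_i = n-k^2}} \prod_{i=1}^k (m_i+1).
\]

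Next I would eliminate the variable $m_1$ by solving the weight constraint: once $m_2,\dots,m_k$ are fixed, the equation forces $m_1 = n-k^2-\sum_{h=2}^k h m_h$, which is a legitimate (nonnegative) multiplicity precisely when $\sum_{h=2}^k h m_h \leq n-k^2$. Substituting, the factor $(m_1+1)$ becomes exactly the linear expression $1+n-k^2-\sum_{h=2}^k h m_h$ appearing in~\eqref{DnkMultisum}, and the surviving product is $\prod_{i=2}^k(m_i+1)$. Thus the claimed identity reduces to showing that summing over all $(m_2,\dots,m_k)$ with $m_i\geq 0$ and $\sum_{h=2}^k h m_h\leq n-k^2$ coincides with the nested sum having upper limits $U_j$.

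The heart of the argument is then to verify that this single inequality, together with nonnegativity, is captured exactly by the nested bounds $U_j$ defined in~\eqref{Uj}. I would establish the set equality directly. In one direction, if $(m_2,\dots,m_k)$ satisfies the inequality, then for each $j$ we have $j m_j + \sum_{h=j+1}^k h m_h = \sum_{h=j}^k h m_h \leq \sum_{h=2}^k h m_h \leq n-k^2$, whence $j m_j \leq n-k^2-\sum_{h=j+1}^k h m_h$ and so $m_j\leq U_j$. In the other direction, a tuple lying within the nested bounds satisfies in particular $m_2\leq U_2$, which unwinds to $\sum_{h=2}^k h m_h\leq n-k^2$. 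Hence the innermost bound alone encodes the full weight constraint, while the outer bounds $U_k,\dots,U_3$ are exactly the consistency conditions permitting each outer index to range over every value compatible with the inner multiplicities being nonnegative.

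I would close by recording the positivity of each summand: since $m_2\leq U_2$ guarantees $m_1 = n-k^2-\sum_{h=2}^k h m_h\geq 0$, the linear factor equals $m_1+1\geq 1$, and each remaining $m_i+1\geq 1$, so every term is a positive integer, as asserted. I expect no genuine obstacle here; the only point demanding care is the bookkeeping in the set-equality step---checking that the floor functions in~\eqref{Uj} give the correct integer upper limits and that the summation order, from the outermost index $m_k$ down to the innermost $m_2$, is consistent with each $U_j$ depending only on the indices $m_{j+1},\dots,m_k$ summed outside it.
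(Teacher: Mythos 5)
Your proposal is correct and follows essentially the same route as the paper: starting from Lemma~\ref{L1}, reindexing the sum over partitions of $n-k^2$ with parts at most $k$ by their multiplicity vectors, bounding each $m_j$ by $U_j$ in nested fashion, and eliminating $m_1$ via the weight constraint so that $(m_1+1)$ becomes the linear factor in~\eqref{DnkMultisum}. The only difference is one of detail: you spell out both inclusions of the set equality between the weight-constrained tuples and the nested-bound tuples, which the paper leaves implicit in its brief argument.
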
 
\begin{proof}  Suppose $\lambda$ is a partition of weight $n-k^2$ with each part at most $k$. 
Then $\lambda$ may contain at most $U_k$ parts equal to $k$.  Once this number $m_k$ of $k$'s is established, then we can say that $\lambda$ 
may contain at most $U_{k-1}$ parts equal to $k-1$, and so on until we arrive at the observation
that $\lambda$ may contain at most $U_2$ $2$'s.  At this point, all remaining parts must equal $1$.
But $n-k^2 = m_1 + 2m_2 + 3m_3 + \cdots + k m_k$, thus 
$m_1 = n - k^2 - (2m_2 + 3m_3 + \cdots + k m_k)$.
Applying the preceding observations to~\eqref{DnkPtn} yields~\eqref{DnkMultisum}.
 \end{proof}

\begin{remark}
Of course, for particular values of $k$, Equation~\eqref{DnkMultisum} can be reduced to a
closed form product using elementary summation formulas:

\begin{align*}
D(n,1)&=n\\
D(n,2) &= \frac{1}{6} \left(-4 \left\lfloor \frac{n}{2}\right\rfloor +3 n-1\right)
   \left(\left\lfloor \frac{n}{2}\right\rfloor -1\right) \left\lfloor
   \frac{n}{2}\right\rfloor
\end{align*}
However, for $k>2$, the formulas are very long, and they obscure the underlying simplicity demonstrated by
the $(k-1)$-fold multisum.
\end{remark}

\section{Quasipolynomial representation of $D(n,k)$}
\subsection{Some notation}
As in Stanley~\cite[p. 474]{RS11}, we say $f: \mathbb{N}\to\mathbb{C}$ is a \emph{quasipolynomial} of degree $d$ if there exists a positive integer $N$ and polynomials $f_0, f_1, \dots, f_{N-1}$ such that
\[ f(n) = f_i(n)  \mbox{ if $n\equiv i \hskip -3mm \pmod{N}$}, \] where $\mathbb{N}$ denotes the nonnegative integers, at least one of the $f_i$ is of degree $d$ and none of the $f_i$ has degree exceeding $d$.   The integer $N$ is called a \emph{quasiperiod} of $f$.

For convenience, let us introduce the notation
\[ \D_k := \D_k(q) = \sum_{n\geq k^2} D(n,k) q^n, \]
and denote the $n$th cyclotomic polynomial as 
  \[ \Phi_n = \Phi_n(q) = \underset{\gcd(n,k)=1}{\prod_{1\leq k \leq n}} (q -e^{2 i \pi k /n}  ). \]
  
  Of course, $\D_k$ admits a partial fraction decomposition of the form
  \begin{equation} \label{DPF} \D_k = \frac{q^{k^2}}{\prod_{j=1}^k \Phi_j^{2\lfloor k/j \rfloor}}
      = \sum_{j=1}^{k} \sum_{\ell=1}^{2\lfloor k/j  \rfloor}  \sum_{h=0}^{\varphi(j)-1} 
      \frac{c_{h,j,\ell}(k) q^h }{\Phi_j^\ell},
       \end{equation} for some rational numbers $c_{h,j,\ell}(k)$ and where $\varphi$ 
       denotes Euler's totient function.

\subsection{The generating function for $D(n,k)$.} 
\begin{remark}The second author and D. Zeilberger have given explicit expressions for $D(n,k)$ as sums of quasipolynomials for numeric $k$ in the range $1\leq k \leq 40$~\cite{SZ12,SZMaple}.  Here, we would like to say something about $D(n,k)$ for general (symbolic) $k$.
\end{remark}

A representation of $D(n,k)$ for any $k$ as a sum of quasipolynomials of quasiperiods
$1,2,3,\dots,k$ arises naturally out of series expansions of the partial fraction decomposition of $\D_k$.
In fact, the series expansion of the terms with denominator $\Phi_j^\ell$, for
$\ell = 1, 2, \dots, 2\lfloor k/j \rfloor$ give a contribution to $D(n,k)$ in the form of a 
quasipolynomial of quasiperiod $j$ and degree $2\lfloor k/j \rfloor -1$. 

 We begin by deriving some results about this decomposition.

\begin{lemma}\label{L}
  Some coefficients in the partial fraction decomposition of $\D_k$ include:
  \begin{align}
     c_{0,1,2k}(k)  &=\frac{1}{(k!)^2}   \label{c012k}\\
     c_{0,1,2k-1}(k) &= \frac{k+1}{2(k-1)! k!} \label{c012km1}\\
     c_{0,1,2k-2}(k) &= \frac{9k^2+25k+13}{72(k-2)! k!} \label{c012km2} \\
     c_{0,1,2k-3}(k) &= \frac{3k^4+10k^3-4k^2-31k-14}{144 (k-2)! k!}  \label{c012km3}
  \end{align}
  \end{lemma}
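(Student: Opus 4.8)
The plan is to recognize that the numbers $c_{0,1,\ell}(k)$ are nothing but the Laurent coefficients of $\D_k$ at its pole $q=1$, and then to extract all four of them from a single Taylor expansion. Since $\Phi_1=q-1$ is the only cyclotomic factor vanishing at $q=1$ (we have $\Phi_j(1)\neq 0$ for every $j\geq 2$), the point $q=1$ is a pole of $\D_k$ of order exactly $2k$. In the partial fraction decomposition~\eqref{DPF} the terms with $j=1$ are, since $\varphi(1)=1$, precisely $\sum_{\ell=1}^{2k} c_{0,1,\ell}(k)/(q-1)^\ell$, and these are exactly the terms singular at $q=1$; all terms with $j\geq 2$ are regular there. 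Hence this sum is the principal part of the Laurent expansion of $\D_k$ at $q=1$, and computing it is equivalent to proving the lemma.

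First I would factor the pole out explicitly. Writing $1-q^j=(1-q)\sigma_j(q)$ with $\sigma_j(q)=1+q+\cdots+q^{j-1}$, and using $(1-q)^{2k}=(q-1)^{2k}$, one gets
\[ \D_k = \frac{1}{(q-1)^{2k}}\, G(q), \qquad G(q) := \frac{q^{k^2}}{\prod_{j=1}^k \sigma_j(q)^2}. \]
Because $\sigma_j(1)=j\neq 0$, the function $G$ is holomorphic and nonzero at $q=1$; if $G(q)=\sum_{m\geq 0} g_m (q-1)^m$ is its Taylor expansion there, then matching against the principal part gives the clean identification $c_{0,1,\ell}(k)=g_{2k-\ell}$. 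In particular the four claimed coefficients are just $g_0,g_1,g_2,g_3$, equivalently $G(1)$, $G'(1)$, $G''(1)/2$, $G'''(1)/6$.

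To compute $g_0,\dots,g_3$ I would pass to the logarithm. The decisive simplification is the identity $\sigma_j(q)=\sum_{m\geq 0}\binom{j}{m+1}(q-1)^m$, immediate from $q^j=(1+(q-1))^j$, which renders every Taylor coefficient of $\sigma_j$ at $q=1$ a polynomial in $j$. Expanding
\[ \log G(q) = k^2\log q - 2\sum_{j=1}^k \log\sigma_j(q) \]
through order $(q-1)^3$ then reduces to substituting $\log(1+x)=x-\tfrac{x^2}{2}+\tfrac{x^3}{3}-\cdots$ and summing the resulting polynomials in $j$ over $j=1,\dots,k$ via the standard formulas for $\sum j$, $\sum j^2$, $\sum j^3$. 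Writing $L_1,L_2,L_3$ for the coefficients of $(q-1),(q-1)^2,(q-1)^3$ in $\log(G(q)/G(1))$, the Taylor coefficients of $G$ follow from the first exponential (Bell-polynomial) relations $g_0=G(1)$, $g_1=G(1)L_1$, $g_2=G(1)(L_2+\tfrac12 L_1^2)$, $g_3=G(1)(L_3+L_1L_2+\tfrac16 L_1^3)$. One checks immediately that $G(1)=1/(k!)^2$ and $L_1=k(k+1)/2$, which already reproduce~\eqref{c012k} and~\eqref{c012km1} after simplification.

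The computation is entirely elementary, so the only real obstacle is bookkeeping: for~\eqref{c012km2}, and especially~\eqref{c012km3}, one must carry the expansions to third order, which brings in the cubic term of $\log(1+x)$, the cross terms $L_1L_2$ and $L_1^3$, and power sums of $j$ up to degree three. I would organize the algebra by first collapsing each $L_i$ into a single polynomial in $k$ (after performing the $j$-summation), and only then forming the combinations above; putting the result over a common denominator ($72$ for~\eqref{c012km2}, $144$ for~\eqref{c012km3}) and factoring out $k(k-1)$ yields exactly the stated numerators. Since no step exceeds routine power-sum identities, the outcome is a finite, mechanically verifiable calculation.
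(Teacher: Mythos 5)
Your proposal is correct, but it follows a genuinely different route from the paper's. The common ground is the setup: like the paper, you identify $c_{0,1,\ell}(k)$ with the Taylor coefficients at $q=1$ of $(1-q)^{2k}\D_k$ (your $G$ is literally the paper's $\F_k$, since $2k$ is even), justified because the $j=1$ terms of \eqref{DPF} are the only ones singular at $q=1$. The divergence is in how those coefficients are computed. The paper exploits the recursion $\D_k = q^{2k-1}(1-q^k)^{-2}\D_{k-1}$, rewritten as $(1-q^k)^2q^{1-2k}\F_k=(q-1)^2\F_{k-1}$, and compares coefficients of $(q-1)^2$ through $(q-1)^5$ to obtain first-order difference equations in $k$ for each $c_{0,1,2k-j}(k)$, which are then solved using initial conditions such as $c_{0,1,2}(1)=1$ and $c_{0,1,2}(2)=\tfrac{11}{16}$. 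You instead compute each coefficient directly and uniformly in $k$: expand $\log G=k^2\log q-2\sum_{j=1}^k\log\sigma_j(q)$ about $q=1$ using $\sigma_j(q)=\sum_{m\geq0}\binom{j}{m+1}(q-1)^m$, sum the resulting polynomials in $j$ by power-sum formulas, and recover $g_0,\dots,g_3$ from the exponential (Bell-polynomial) relations. Your method buys closed forms in a single pass, with no recurrences to solve and no separately computed initial values; the paper's method buys a uniform mechanism in which each further coefficient costs only one more coefficient comparison in a fixed identity, which is why the authors can say one may continue indefinitely. Your executed steps check out: $G(1)=1/(k!)^2$ and $L_1=k(k+1)/2$ do give \eqref{c012k} and \eqref{c012km1}, and pushing your scheme to second order indeed yields $L_2+\tfrac12L_1^2=\tfrac{1}{72}k(k-1)(9k^2+25k+13)$, i.e.\ \eqref{c012km2}. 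You leave \eqref{c012km2} and \eqref{c012km3} as described-but-unexecuted calculations, but since you pin down exactly which combinations (e.g.\ $L_3+L_1L_2+\tfrac16L_1^3$) and which power sums are needed, this is routine bookkeeping rather than a gap --- the same level of detail at which the paper itself treats \eqref{c012km3} (``follows analogously'').
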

  \begin{proof}
    Since $\D_k$ has a pole of order $2k$ at $q=1$, it will be convenient to define $\F_k = \F_k(q) := (1-q)^{2k} \D_k,$ so that $\F_k$ is analytic at $q=1$ and thus has a Taylor series expansion there:
    \[ \F_k = \sum_{j=0}^{2k-1} c_{0,1,2k-j}(k) (q-1)^j + \mbox{ higher degree terms}.\] 
    
    Clearly,  \begin{align}  \D_k &= \frac{ q^{2k-1}}{(1-q^k)^2}  \D_{k-1} \notag \\
    \implies\F_k &= \frac{ (q-1)^2 q^{2k-1}}{(1-q^k)^2}  \F_{k-1}  \notag \\
   \implies \frac{(1-q^k)^2 }{q^{2k-1}}\F_k &= (q-1)^2 \F_{k-1}. \notag
   \end{align}
   By expanding the initial factor as a Taylor series about $q=1$, we see that
   \begin{multline} \Big( k^2(q-1)^2 - k^3(q-1)^3 + \frac{7k^4+6k^3-k^2}{12} (q-1)^4 \\ + 
   \frac{-3k^5-7k^4-3k^3+k^2}{12} (q-1)^5 + \cdots \Big) \\ \times \Big( \sum_{j=0}^{2k-1} c_{0,1,2k-j}(k) (q-1)^j + \mbox{ higher degree terms} \Big) \\
   = \sum_{r=0}^{2k-3} c_{0,1,2k-2-r}(k-1) (q-1)^{r+2} + \mbox{ higher degree terms} . \label{Rec}
    \end{multline}
    By comparing coefficients of $(q-1)^2$ on either side of~\eqref{Rec}, we find
    \begin{equation} \label{DE2}
      k^2 c_{0,1,2k}(k)  = c_{0,1,2k-2}(k-1). 
    \end{equation}
 Solving the difference equation~\eqref{DE2} with initial condition $c_{0,1,2}(1)=1$ yields
 ~\eqref{c012k}.  
 
   Next, by comparing coefficients of $(q-1)^3$ on either side of~\eqref{Rec}, we find
   \begin{equation} \label{DE3}
      k^2 c_{0,1,2k-1}(k) -k^3 c_{0,1,2k}(k) = c_{0,1,2k-3}(k-1). 
   \end{equation}
 Solving the difference equation~\eqref{DE3} taking into account $c_{0,1,2k}(k) = 1/(k!)^2$ by~\eqref{c012k} and initial condition $c_{0,1,1}(1) = 1$ yields~\eqref{c012km1}.  
   
    Similarly, compare coefficients of $(q-1)^4$ on either side of~\eqref{Rec} to obtain
    \begin{equation} \label{DE4}
    k^2 c_{01,2k-2}(k) - k^3 c_{0,1,2k-1}(k) + \frac{7k^4+6k^3-k^2}{12} c_{0,1,2k}(k) =
    c_{0,1,2k-4}(k-1). \end{equation}
 Solve the difference equation~\eqref{DE4} taking into account both~\eqref{c012k} and~\eqref{c012km1}, with initial condition  $c_{0,1,2}(2) = \frac{11}{16}$ gives~\eqref{c012km2}.
    
 Equation~\eqref{c012km3} follows  
 analogously from the coefficient of $(q-1)^5$ on either side of~\eqref{Rec}.  Obviously, one can continue indefinitely beyond the results listed in this lemma.
  
  \end{proof}
  
  \begin{remark}
    It appears that the coefficients of the powers of $k$ in the $c_{0,1,r}(k)$ form a discernible pattern as well.  Specifically, we have
    \begin{multline*} 
    c_{0,1,2k-j}(k) = \frac{1}{2^j j! (k!)^2} \Big(  k^{2j} + \frac{j^2-10j}{9} k^{2j-1}  \\  + 
    \frac{ \frac{1}{2} j^4 - 12 j^3 - \frac{43}{2} j^2 + 33j }{9^2} k^{2j-2}     + \mbox{ lower degree terms } \Big).
    \end{multline*}
  \end{remark}
  
  \begin{remark}
    Note that the formulas for specific coefficients in the partial fraction decomposition of $\D_k$ are analogous to those of the partial fraction decomposition of $\prod_{j=1}^N (1-q^j)^{-1}$ given by the second author and Zeilberger in~\cite[p. 685, Theorem 3.2]{SZ13}.
  \end{remark}

\begin{theorem} \label{DnkQP}
$D(n,k)$ is a quasipolynomial in $n$ of degree $2k-1$ and quasiperiod $\lcm(1,2,\dots,k)$.
Furthermore, the leading terms of $D(n,k)$ are as follows:
\[ D(n,k) = \frac{1}{(k!)^2 (2k-1)!} n^{2k-1} - \frac{1}{2 (2k-2)! k! (k-2)!} n^{2k-2} + \mbox{ lower degree terms.} \]
\end{theorem}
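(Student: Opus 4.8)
The plan is to read everything off the partial fraction decomposition~\eqref{DPF} of $\D_k$ together with the standard dictionary between poles of a rational generating function and quasipolynomial coefficient sequences. The basic principle I would invoke is that a single term $c\,q^h/\Phi_d^\ell$ contributes to $[q^n]\D_k$ a quasipolynomial in $n$ of degree $\ell-1$ whose coefficients depend only on the residue of $n$ modulo $d$; this is because $1/\Phi_d^\ell$ is a product/power of simple rational functions with poles at the primitive $d$th roots of unity, each of which expands as a geometric-type series whose $n$th coefficient is a polynomial in $n$ of degree $\ell-1$ times a $d$th root of unity raised to the $n$th power. Summing the finitely many terms in~\eqref{DPF} then exhibits $D(n,k)$ as a sum of such quasipolynomials, hence as a quasipolynomial.

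Granting that principle, the degree and quasiperiod claims are immediate structural observations. The denominator in~\eqref{DPF} is $\prod_{j=1}^k \Phi_j^{2\lfloor k/j\rfloor}$, so the unique pole of maximal order is at $q=1$, where $\Phi_1=q-1$ occurs to order $2\lfloor k/1\rfloor=2k$; every other pole, at a primitive $d$th root of unity with $2\le d\le k$, has order $2\lfloor k/d\rfloor\le k<2k$. The maximal order $2k$ therefore forces the degree of the quasipolynomial to be exactly $2k-1$ (the leading coefficient being nonzero, as the computation below shows). For the quasiperiod, I would note that every pole sits at a root of unity of some order $d\le k$, and the corresponding quasipolynomial has period dividing $d$, hence dividing $\lcm(1,2,\dots,k)$; thus $\lcm(1,2,\dots,k)$ is a common quasiperiod for all the contributions and hence a quasiperiod for $D(n,k)$, as claimed by the definition adopted from Stanley.

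The substance is the computation of the two leading coefficients, and here only the pole at $q=1$ matters, since all other poles contribute degree at most $k-1<2k-2$. I would isolate the two highest-order $q=1$ terms, $c_{0,1,2k}(k)/\Phi_1^{2k}$ and $c_{0,1,2k-1}(k)/\Phi_1^{2k-1}$, and expand via
\[
\frac{1}{\Phi_1^{\ell}}=\frac{1}{(q-1)^\ell}=(-1)^\ell\sum_{n\ge 0}\binom{n+\ell-1}{\ell-1}q^n,
\qquad
\binom{n+\ell-1}{\ell-1}=\frac{n^{\ell-1}}{(\ell-1)!}+\frac{\ell}{2(\ell-2)!}\,n^{\ell-2}+\cdots .
\]
The $n^{2k-1}$ coefficient comes solely from $\ell=2k$ and equals $c_{0,1,2k}(k)/(2k-1)!$, which by~\eqref{c012k} is $1/\bigl((k!)^2(2k-1)!\bigr)$, matching the stated leading term. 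The $n^{2k-2}$ coefficient receives two contributions: the subleading term of the $\ell=2k$ expansion, namely $c_{0,1,2k}(k)\,\tfrac{k(2k-1)}{(2k-1)!}$, and the leading term of the $\ell=2k-1$ expansion, namely $-c_{0,1,2k-1}(k)/(2k-2)!$ (the sign from $(-1)^{2k-1}$). Substituting~\eqref{c012k} and~\eqref{c012km1} and combining over the common denominator $(k-1)!\,k!\,(2k-2)!$ collapses the numerator to $1-k$, and $\tfrac{1-k}{(k-1)!}=\tfrac{-1}{(k-2)!}$ yields $-\tfrac{1}{2(2k-2)!\,k!\,(k-2)!}$, exactly the second stated term.

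The main obstacle is not conceptual but the careful bookkeeping in this last step: one must track two separate sources for the $n^{2k-2}$ coefficient, keep the sign convention straight in passing from $\Phi_1=q-1$ to $(1-q)$ (so that $\ell=2k$ is harmless but $\ell=2k-1$ introduces a minus sign), and correctly extract the subleading coefficient $\tfrac{\ell}{2(\ell-2)!}$ of the binomial. The only external inputs are the two identities~\eqref{c012k} and~\eqref{c012km1} already established in Lemma~\ref{L}, so no new hard analysis is required; the argument is a clean consequence of the partial fraction structure plus elementary series expansion.
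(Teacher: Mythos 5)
Your proposal is correct and follows essentially the same route as the paper: expand the partial fraction decomposition~\eqref{DPF}, read off the quasiperiod $\lcm(1,2,\dots,k)$ and degree $2k-1$ from the pole structure, and obtain the two leading coefficients from $c_{0,1,2k}(k)$ and $c_{0,1,2k-1}(k)$ of Lemma~\ref{L} via the binomial expansion of $1/\Phi_1^{\ell}$. In fact your write-up is slightly more complete than the paper's, which merely notes that the $n^{2k-2}$ coefficient ``arises from two contributions'' without carrying out the arithmetic or flagging the sign issue from $\Phi_1^{2k-1}=(q-1)^{2k-1}=-(1-q)^{2k-1}$, both of which you handle correctly.
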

\begin{proof}
Each term in the series expansion of $\D_k$ of the form $c_{h,j,\ell} q^h / \Phi_j^{\ell}$ yields a series in which the coefficient of $q^n$ is a quasipolynomial in $n$ of quasiperiod $j$.  Thus $D(n,k)$ is a quasipolynomial of quasiperiod $\lcm(1,2,3,\dots,k)$ and of degree $2\lfloor k/j \rfloor - 1$.

The (binomial) series expansion of the term 
\[  \frac{c_{0,1,2k}(k)}{\Phi_1^{2k}} = \frac{1}{(k!)^2 \Phi_1^{2k}} \mbox{ (by Lemma~\ref{L})} \]
in the partial fraction decomposition of $\D_k$
is \begin{equation} \label{LeadBT} \frac{1}{(k!)^2} \sum_{n\geq 0} \binom{n+2k-1}{2k-1} q^n.  \end{equation}
Clearly no other term of the partial fraction decomposition of $\D_k$ yields a coefficient of $q^n$ 
which is of equal or larger degree in $n$; the degree of $\binom{n+2k-1}{2k-1}$ is $2k-1$, and 
its leading coefficient is $1/[(k!)^2(2k-1)!]$.

  The coefficient of $n^{2k-2}$ in $D(n,k)$ arises from two contributions: The second leading term in 
 the expansion of~\eqref{LeadBT} and the leading term in the coefficient of $q^n$ in the series 
 expansion of 
  \[ \frac{c_{0,1,2k-1}(k)}{\Phi_1^{2k-1}} . \]

\end{proof}

\section{Computational considerations}
\subsection{Computation with multisum, Eq.~\eqref{DnkMultisum}} \label{CompMultisum}
We shall derive a result about the computational complexity of
calculating $p(n)$ using~\eqref{PofN} and~\eqref{DnkMultisum} as a result
of the following brief excursion.

Recall the first Rogers--Ramanujan identity~\cite{LJR94}:
\begin{RRSP}  
\begin{equation}
\sum_{k\geq 0}\frac{q^{k^2}}{(1-q)(1-q^2)\cdots(1-q^k)} =
\prod_{j\geq 0} \frac{1}{ (1-q^{5j+1})(1-q^{5j+4} ) },  \label{RRsp}
\end{equation}
\end{RRSP} and its combinatorial interpretation~\cite[p. 35]{M}
\begin{RRC}  For integers $n$, the number of partitions of $n$ into parts which differ from each
other
by at least $2$ equals the number of partitions of $n$ into parts
congruent to $\pm 1 \pmod{5}$.
\end{RRC}

Given the similarity between the general term on the left-hand side of~\eqref{RRsp} and the
generating function for $D(n,k)$, it is not surprising that there are related results.
Noting that 
\begin{equation} \label{RR1term} 
   \frac{q^{k^2}}{(1-q)(1-q^2)\cdots(1-q^k)}=: \sum_{n\geq 0} r_1(n,k) q^n \end{equation}
is the generating function for partitions of length $k$ in which parts differ by at least $2$, if we 
expand~\eqref{RR1term}
as we did in the generating function proof of Lemma~\ref{L1}, the following lemma is immediate:

\begin{lemma} \label{L2}
The number of terms 
in~\eqref{DnkMultisum} equals $r_1(n,k)$.
\end{lemma}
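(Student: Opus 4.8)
The plan is to compute the number of terms in~\eqref{DnkMultisum} directly as a lattice-point count, identify it with a partition count, and then recognize that same count as the coefficient extracted in~\eqref{RR1term}. Since~\eqref{DnkMultisum} is a nested finite sum, its number of summands is obtained by replacing the summand with $1$; thus the number of terms equals the number of integer tuples $(m_2, m_3, \dots, m_k)$ with each $m_j \ge 0$ lying in the ranges prescribed by~\eqref{Uj}. Unwinding those bounds, the condition $0 \le m_j \le U_j$ is exactly the integer inequality $j m_j \le n - k^2 - \sum_{h=j+1}^k h m_h$, that is, $\sum_{h=j}^k h m_h \le n - k^2$. Because the innermost of these, $\sum_{h=2}^k h m_h \le n - k^2$, forces all the outer ones (the omitted terms $h m_h$ being nonnegative), the tuples being counted are precisely those with $m_j \ge 0$ and $2m_2 + 3m_3 + \cdots + k m_k \le n - k^2$.

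Next I would invoke the bijection already implicit in the proof of the theorem establishing~\eqref{DnkMultisum}: to each such tuple associate $m_1 := n - k^2 - (2m_2 + \cdots + k m_k) \ge 0$. This gives a one-to-one correspondence between the counted tuples and the partitions $\lambda = \langle 1^{m_1} 2^{m_2} \cdots k^{m_k} \rangle \in \mathcal{P}_{n-k^2,k}$, so the number of terms equals $|\mathcal{P}_{n-k^2,k}|$, the number of partitions of $n - k^2$ into parts not exceeding $k$.

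To finish, I would expand $1/[(1-q)(1-q^2)\cdots(1-q^k)]$ as a product of geometric series exactly as in the generating-function proof of Lemma~\ref{L1}, but with first powers in place of squares; the coefficient of $q^{n-k^2}$ in this product is then visibly $|\mathcal{P}_{n-k^2,k}|$. Multiplying by $q^{k^2}$ shifts this to the coefficient of $q^n$, which by~\eqref{RR1term} is $r_1(n,k)$. Hence $r_1(n,k) = |\mathcal{P}_{n-k^2,k}|$ equals the number of terms in~\eqref{DnkMultisum}. I foresee no real obstacle here, as the argument is pure bookkeeping; the only point demanding a moment's care is the reduction of the nested floor-bounded system from~\eqref{Uj} to the single inequality $2m_2 + \cdots + k m_k \le n-k^2$, which is immediate once one notes that $m_j \le U_j$ is equivalent to the stated integer inequality and introduces no further restriction.
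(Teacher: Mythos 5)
Your proof is correct and takes essentially the same route as the paper: the paper likewise expands $q^{k^2}/[(1-q)(1-q^2)\cdots(1-q^k)]$ as a product of geometric series ``as in the generating function proof of Lemma~\ref{L1}'', so that $r_1(n,k)=|\mathcal{P}_{n-k^2,k}|$, and identifies the terms of~\eqref{DnkMultisum} with those partitions. Your explicit unwinding of the nested floor bounds $0\le m_j\le U_j$ from~\eqref{Uj} into the single inequality $2m_2+\cdots+km_k\le n-k^2$, followed by setting $m_1=n-k^2-\sum_{h=2}^k hm_h$, is precisely the bookkeeping the paper leaves implicit.
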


We also note that there is a straightforward bijection
between the partitions of $n-k^2$ in which no part exceeds $k$
(i.e. the partitions indexing the sum in~\eqref{DnkPtn}), and partitions of weight $n$ and length $k$ in
which all parts differ by at least $2$.

Summing over all relevant $k$, that is $1\leq k \leq \lfloor \sqrt{n} \rfloor$, we obtain
\begin{proposition}
The number of terms in~\eqref{PofN}, where $D(n,k)$ is calculated via the multisum~\eqref{DnkMultisum} equals $r_1(n)$, the number of partitions of $n$ into parts which mutually
differ by at least $2$.
\end{proposition}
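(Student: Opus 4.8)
The plan is to combine Lemma~\ref{L2} with a summation over all admissible Durfee square orders, and then to identify the resulting total with $r_1(n)$ by a direct length argument. First I would invoke Lemma~\ref{L2}, which asserts that for each fixed $k$ the number of terms in the multisum~\eqref{DnkMultisum} is exactly $r_1(n,k)$, the number of partitions of $n$ of length $k$ whose parts differ by at least $2$. Since~\eqref{PofN} expresses $p(n)$ as the sum of the $D(n,k)$ over the range $0 \le k \le \lfloor\sqrt{n}\rfloor$, and each $D(n,k)$ is expanded via its own multisum, the total number of terms in the combinatorial formula~\eqref{PofN} is $\sum_{k=0}^{\lfloor\sqrt{n}\rfloor} r_1(n,k)$.

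Next I would observe that, by definition, $r_1(n) = \sum_{k\ge 0} r_1(n,k)$: every partition of $n$ into parts differing by at least $2$ has a well-defined length $k$, so these partitions are sorted without overlap or omission according to their length. It then remains only to show that truncating the sum at $\lfloor\sqrt{n}\rfloor$ discards nothing, i.e. that $r_1(n,k) = 0$ whenever $k > \lfloor\sqrt{n}\rfloor$.

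The crux is this length bound. If $\lambda = (\lambda_1 > \lambda_2 > \cdots > \lambda_k)$ is a partition of $n$ of length $k$ with $\lambda_i - \lambda_{i+1} \ge 2$ for every $i$, then reading the gaps from the bottom up forces $\lambda_k \ge 1$, $\lambda_{k-1} \ge 3$, and in general $\lambda_{k-i} \ge 2i+1$, whence
\[ n = \sum_{i=1}^{k} \lambda_i \ge 1 + 3 + 5 + \cdots + (2k-1) = k^2. \]
Consequently a partition counted by $r_1(n,k)$ can exist only when $k^2 \le n$, that is, when $k \le \lfloor\sqrt{n}\rfloor$. Hence $r_1(n,k) = 0$ for $k > \lfloor\sqrt{n}\rfloor$, and the truncated sum coincides with the full sum:
\[ \sum_{k=0}^{\lfloor\sqrt{n}\rfloor} r_1(n,k) = \sum_{k\ge 0} r_1(n,k) = r_1(n). \]
I do not anticipate any genuine obstacle: beyond Lemma~\ref{L2}, the only substantive ingredient is the elementary fact that the minimal weight of a length-$k$ partition with parts differing by at least $2$ is the staircase sum $k^2$. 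This same fact, incidentally, is precisely what justifies the upper summation limit $\lfloor\sqrt{n}\rfloor$ in~\eqref{PofN} to begin with, since a Durfee square of order $k$ already accounts for $k^2$ of the weight.
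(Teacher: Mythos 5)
Your proof is correct and follows essentially the same route as the paper, which simply combines Lemma~\ref{L2} with a sum over $1\leq k\leq\lfloor\sqrt{n}\rfloor$. You additionally spell out the detail the paper leaves implicit---that the staircase bound $n\geq 1+3+\cdots+(2k-1)=k^2$ forces $r_1(n,k)=0$ for $k>\lfloor\sqrt{n}\rfloor$, so the truncated sum equals $r_1(n)$---which is a welcome, if elementary, clarification.
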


 Now, on to the complexity result we were leading up to.  Each term on the right hand side of~\eqref{DnkMultisum} involves a product of $k$ factors,
and since $k$ never exceeds $\lfloor \sqrt{n} \rfloor$, the number of integer additions and
multiplications required to calculate $p(n)$ using~\eqref{PofN} via~\eqref{DnkMultisum},
is less than $\lfloor \sqrt{ n }\rfloor r_1(n)$.  Taking into account Lehner's asymptotic 
result~\cite[p.  655, Eq. (12.4) with $a=1$]{L41} (cf.~\cite[p. 97, Ex. 1]{A76}),
\begin{equation}
 r_1(n) \sim \frac{ \sqrt{15+3\sqrt{5}} }{ (60n-1)^{3/4} } \exp\left(  \frac{\pi\sqrt{60n-1}}{15} \right)
\end{equation}
as $n\to\infty$,
we immediately have the following
\begin{theorem}
The number of integer operations (additions and multiplications) required to calculate
$p(n)$ using Equation~\eqref{PofN} via the multisum~\eqref{DnkMultisum} is of
order \[ O(  n^{-1/4}   e^{c \sqrt{n} } ),\] as $n\to\infty$, where $c = 2\pi/\sqrt{15}.$
\end{theorem}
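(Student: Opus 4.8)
The plan is to combine the operation-count bound recorded in the paragraph preceding the theorem with Lehner's asymptotic formula for $r_1(n)$, and then read off the leading order. First I would justify the upper bound $\lfloor\sqrt{n}\rfloor\, r_1(n)$ on the total number of integer additions and multiplications. By the Proposition, computing $p(n)$ through \eqref{PofN} and \eqref{DnkMultisum} amounts to summing exactly $r_1(n)$ terms, one for each partition of $n$ into parts differing by at least $2$. Each such term is the product of at most $k \le \lfloor\sqrt{n}\rfloor$ factors, together with the linear factor and the inner sum it contains, so a single term is evaluated using $O(\sqrt{n})$ integer operations. Hence the total cost is $O(\sqrt{n}\, r_1(n))$, and it remains to determine the order of this quantity.

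Next I would substitute Lehner's asymptotic, obtaining $\sqrt{n}\, r_1(n) \sim \frac{\sqrt{15+3\sqrt{5}}\,\sqrt{n}}{(60n-1)^{3/4}} \exp\!\big(\frac{\pi\sqrt{60n-1}}{15}\big)$ as $n\to\infty$, and then treat the algebraic and exponential parts separately. For the algebraic prefactor, $(60n-1)^{3/4} \sim (60n)^{3/4} = 60^{3/4} n^{3/4}$, so that $\sqrt{n}/(60n-1)^{3/4} \sim 60^{-3/4} n^{-1/4}$; this is what produces the factor $n^{-1/4}$ in the claimed order, with $\sqrt{15+3\sqrt{5}}\,60^{-3/4}$ supplying the implied constant.

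The delicate step, and the one I expect to require the most care, is the exponent. I would write $\sqrt{60n-1} = \sqrt{60n}\,\sqrt{1 - 1/(60n)} = \sqrt{60n}\,\big(1 + O(n^{-1})\big)$, so that $\frac{\pi\sqrt{60n-1}}{15} = \frac{\pi\sqrt{60n}}{15} + O(n^{-1/2})$; the point to verify is that the correction $O(n^{-1/2})$ is vanishing, hence contributes only a bounded factor $e^{O(n^{-1/2})} = 1 + O(n^{-1/2})$ to the exponential rather than changing its growth rate. Using $\sqrt{60} = 2\sqrt{15}$, the leading coefficient simplifies to $\frac{\pi\sqrt{60}}{15} = \frac{2\pi\sqrt{15}}{15} = \frac{2\pi}{\sqrt{15}} = c$, so the exponent equals $c\sqrt{n} + O(n^{-1/2})$ and the exponential is $e^{c\sqrt{n}}\big(1 + O(n^{-1/2})\big)$. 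Multiplying the prefactor and exponential estimates yields $\sqrt{n}\, r_1(n) = O\big(n^{-1/4} e^{c\sqrt{n}}\big)$ with $c = 2\pi/\sqrt{15}$, which is precisely the asserted complexity. Everything outside the exponent simplification is a mechanical combination of the Proposition with Lehner's estimate.
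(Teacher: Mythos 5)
Your proposal is correct and follows exactly the paper's approach: bound the operation count by $\lfloor\sqrt{n}\rfloor\, r_1(n)$ using the term count from the Proposition and the fact that each term involves at most $\lfloor\sqrt{n}\rfloor$ factors, then substitute Lehner's asymptotic for $r_1(n)$ and simplify. The paper treats the final asymptotic simplification as immediate, whereas you spell out the prefactor and exponent estimates ($\sqrt{n}/(60n-1)^{3/4} \sim 60^{-3/4}n^{-1/4}$ and $\pi\sqrt{60n-1}/15 = c\sqrt{n} + O(n^{-1/2})$ with $c = 2\pi/\sqrt{15}$), which is a correct and welcome elaboration of the same argument.
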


\begin{remark}
The computation of $p(n)$ for all $1\leq n \leq N$ may be done more efficiently using Euler's recurrence~\cite[p. 12, Cor. 1.8]{A76}.  The interested reader may wish to consult~\cite{Cetal} for a 
discussion of efficient algorithms for computing $p(n)$.
\end{remark}

\subsection{Estimation based on quasipolynomial representations of $D(n,k)$}
It is well known that Rademacher's series~\eqref{HRR} converges to $p(n)$ extremely rapidly.  
In fact, just the $k=1$ term provides an excellent approximation to $p(n)$.  How does 
formula~\eqref{PofN} compare to Rademacher's series and how shall we compare them?   
As remarked by Hardy and Ramanujan~\cite[p. 81]{HR18}, $q=1$ is the ``heaviest" singularity 
of the generating function for the partition function on the unit circle, and accordingly the $k=1$ 
term of~\eqref{HRR} contributes more, by far, to the value of $p(n)$ than any other term.  
Analogously, when we consider the formula~\eqref{PofN}, the terms of the partial fraction expansion 
of the $D(n,k)$ with denominators $(q-1)^\ell$ are those that have singularity at $q=1$.  It is these 
terms that give rise to the ``polynomial part" (i.e. the part with quasiperiod $1$) of the expression 
for $D(n,k)$ as a sum of  quasipolynomials.   Accordingly, while we may write
\begin{align*}
 D(n,1) &= n,\\
 D(n,2) &= \frac{(n-1)(2n^2-4n-3)}{48} + (-1)^n \frac{n-1}{16},\\
 D(n,3) &= \frac{(n-3)(6n^4-72n^3+184n^2+192n-235)}{25920} -(-1)^n \frac{n-3}{64} \\
      & \qquad\qquad \notag + \frac{(\omega^n + \omega^{-n})(n-3) + \legendre{n}{3}}{81}\\
      &\mbox{ (where $\omega:=e^{2\pi i/3}$ and $\legendre{n}{3}$ is the Legendre symbol)},\\
 D(n,4) &=   \frac{(n-6)(6n^6-216n^5+2610n^4-10800n^3-2451n^2+60516n-23905)}{17418240}  \\
      &\qquad\qquad \notag + (-1)^n\frac{(n-1)(n-6)(n-11)}{6144}- \frac{(\omega^n + \omega^{-n})(n-6)+ 3\legendre{n}{3}}{243} \\
      &\qquad\qquad\notag + \frac{(i^n + i^{-n})(n-6) + 2(i^{n-1}+i^{1-n})  }{256}, \mbox{ etc.}
\end{align*}
Let us define the ``polynomial approximation"  $\tilde{D}(n,k)$ for each $D(n,k)$ by extracting the terms of quasiperiod 1, so that 
\begin{align*}
 \tilde{D}(n,1) &= n, \\
 \tilde{D}(n,2) &= \frac{(n-1)(2n^2-4n-3)}{48}, \\
 \tilde{D}(n,3) &= \frac{(n-3)(6n^4-72n^3+184n^2+192n-235)}{25920}, \\
 \tilde{D}(n,4) &=   \frac{(n-6)(6n^6-216n^5+2610n^4-10800n^3-2451n^2+60516n-23905)}{17418240},    \\ & \mbox{ etc.,}
\end{align*}
and approximate $p(n)$ by both
\begin{equation} p_D(n):=\sum_{k=1}^{\lfloor \sqrt{n} \rfloor} \tilde{D}(n,k) \end{equation} and
by the $k=1$ term of~\eqref{HRR}:
\begin{equation} p_R(n):=  
\frac{ \cosh \left( \pi 
   \sqrt{\frac 23 \left(n-\frac{1}{24}\right) }\right)}{2\sqrt{3}
   \left(n-\frac{1}{24}\right)}-\frac{\sinh \left( \pi 
   \sqrt{\frac 23 \left( n-\frac{1}{24}\right)}\right)}{2\pi\sqrt{2} \left(n-\frac{1}{24}\right)^{3/2}}.
 \end{equation}
 
 \begin{remark}
 Equivalently, we could define $\tilde{D}(n,k)$ to be minus the residue of $\D_k(q)/q^{n+1}$ at $q=1$,
 or obtain $\tilde{D}(n,k)$ from the principal part of the Laurent series of $\D_k(q)$ about $q=1$.
 \end{remark}
 
Further, numerical experimentation suggests the following rather tight bound, which has been 
verified for $1\leq n \leq 500$: 
\begin{conjecture} For positive integer $n$,
\begin{equation}  \label{pDerror} | p(n) - p_D(n) | \leq  \frac{ 2 (\frac n4)^{\nu - 1} }{\nu \Gamma(\nu + 1) \Gamma( \frac\nu 2 + 1)
\Gamma(\frac\nu 2)}, \end{equation} 
where $\nu = \nu(n) = \frac{27}{50} (2 + \sqrt{n})$.
\end{conjecture}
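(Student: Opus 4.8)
The plan is to trace the error $p(n)-p_D(n)$ back to the singularities of the generating functions $\D_k$ other than $q=1$, and to show that their total contribution is governed by the second-heaviest singularity, at $q=-1$. By \eqref{PofN} and the definition of $p_D$ we have, for $n\ge 1$,
\[ p(n)-p_D(n)=\sum_{k=1}^{\lfloor\sqrt n\rfloor}\bigl(D(n,k)-\tilde D(n,k)\bigr), \]
since the only partition with Durfee square of order $0$ is empty, so $D(n,0)=0$. Using the Remark that identifies $\tilde D(n,k)$ with $-\,\mathrm{Res}_{q=1}\D_k(q)/q^{n+1}$, together with the residue theorem applied to $\D_k(q)/q^{n+1}$, each summand equals minus the sum of the residues of $\D_k(q)/q^{n+1}$ over the roots of unity $\zeta\ne 1$ of order at most $k$. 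In the partial-fraction language of \eqref{DPF} this is exactly the contribution to $D(n,k)$ of all cyclotomic blocks $\Phi_j$ with $j\ge 2$, i.e. the sum of all the quasiperiod-$>1$ pieces of the quasipolynomial in Theorem~\ref{DnkQP}.

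First I would isolate the dominant block. Among all $\zeta\ne 1$, the pole at $q=-1$ (the block $\Phi_2$) has the largest order, $2\lfloor k/2\rfloor$, and produces a quasiperiod-$2$ quasipolynomial of degree $2\lfloor k/2\rfloor-1$; every primitive $j$-th root of unity with $j\ge 3$ yields a pole of order only $2\lfloor k/j\rfloor$ and a correspondingly lower degree. A useful reformulation of the target comes from simplifying the right-hand side of \eqref{pDerror}: using the identity $\nu\Gamma(\tfrac\nu2)=2\Gamma(\tfrac\nu2+1)$ one checks that the bound equals $(n/4)^{\nu-1}\big/\bigl(\Gamma(\nu+1)\Gamma(\tfrac\nu2+1)^2\bigr)$. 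The appearance of $n/4$ is the signature of the $q=-1$ singularity: near $q=-1$ only even parts contribute to the order of the pole, so the local problem runs at ``half temperature,'' rescaling the effective weight by $1/4$ and replacing the Durfee order $k$ by $m=\lfloor k/2\rfloor$.

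Then I would compute the leading coefficient of the $\Phi_2$-block of $D(n,k)$ by exactly the difference-equation method used for Lemma~\ref{L} at $q=1$, but expanded about $q=-1$; the explicit cases $D(n,2)$ and $D(n,4)$ suggest that for even $k=2m$ the leading period-$2$ term is $(-1)^n\,\tfrac14\,(n/4)^{2m-1}\big/\bigl((m!)^2(2m-1)!\bigr)$, with an analogous coefficient for odd $k$. Summing these over $k$ produces a Bessel/hypergeometric-type series in the variable $n/4$, which I would evaluate by its saddle point in $m$: Stirling's formula turns the maximal term into $e^{c\sqrt n}/\mathrm{poly}(n)$ and, after matching the saddle to a continuous order $\nu\approx 2m$, reproduces the closed form $(n/4)^{\nu-1}\big/\bigl(\Gamma(\nu+1)\Gamma(\tfrac\nu2+1)^2\bigr)$. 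A direct computation of the exponent with $\nu=\tfrac{27}{50}(2+\sqrt n)$ gives growth like $e^{1.37\sqrt n}$, comfortably above the true $j=2$ rate $e^{(\pi/\sqrt6)\sqrt n}=e^{1.28\sqrt n}$; the slack is what lets a single closed-form term dominate the whole series together with its polynomial prefactors, and it is here that the empirical constant $\tfrac{27}{50}$ enters.

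The hard part will be making every step uniform and rigorous rather than merely asymptotic. Three obstacles stand out. First, one needs explicit bounds on the residues at $q=-1$ and at the higher roots of unity that are uniform in both $k$ and $n$; this requires controlling the analytic prefactor $g_k(q)=(1+q)^{2\lfloor k/2\rfloor}\D_k(q)$ and its derivatives near $q=-1$, and summing the $j\ge 3$ blocks to show they are of genuinely smaller exponential order. Second, the saddle-point evaluation of the sum over $k$ must be carried out with honest error terms, and---because the summands carry the oscillating factors $(-1)^n$ and roots of unity---one must show that the possible cancellation never makes $|p(n)-p_D(n)|$ exceed the single positive term on the right of \eqref{pDerror}. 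Third, the precise constant $\tfrac{27}{50}$ appears to be tuned numerically so that the inequality holds with minimal slack, and proving it for every $n$ (not just $1\le n\le 500$) would require either a clean closed form for the dominant sum or a careful uniform accounting of all lower-order corrections. An alternative route worth pursuing in parallel is to argue that $p_D(n)$ agrees with the $k=1$ Rademacher term $p_R(n)$ of \eqref{HRR} up to a negligible error and then to bound the Rademacher tail $\sum_{k\ge 2}$ by the right-hand side of \eqref{pDerror} using known estimates for \eqref{HRR}; but establishing $p_D\approx p_R$ is itself nontrivial, since the two quantities arise from different (combinatorial versus circle-method) computations of the same $q=1$ singularity.
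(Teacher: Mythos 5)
The statement you are addressing is labeled a \emph{Conjecture} in the paper: the authors supply no proof whatsoever, only the remark that the bound has been verified numerically for $1\leq n\leq 500$. So there is no paper proof to compare your argument against; the only question is whether your proposal itself settles the conjecture, and it does not --- it is a research program with the decisive steps missing, as you yourself acknowledge.

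To give credit where due: your structural reduction is sound. For $n\geq 1$ one does have $p(n)-p_D(n)=\sum_{k=1}^{\lfloor\sqrt n\rfloor}\bigl(D(n,k)-\tilde D(n,k)\bigr)$, and each difference $D(n,k)-\tilde D(n,k)$ is precisely the contribution of the blocks $\Phi_j$, $j\geq 2$, in the partial fraction decomposition \eqref{DPF}; your simplification of the right side of \eqref{pDerror} via $\nu\Gamma(\tfrac\nu2)=2\Gamma(\tfrac\nu2+1)$ is correct; and your growth comparison ($e^{\approx 1.37\sqrt n}$ for the bound versus $e^{(\pi/\sqrt 6)\sqrt n}\approx e^{1.28\sqrt n}$ for the expected true error) checks out. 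But every step that would have to be quantitative is left heuristic. Concretely: (i) the claimed leading coefficient of the $\Phi_2$ block for general $k$ is only extrapolated from the displayed cases $D(n,2)$ and $D(n,4)$; the analogue of Lemma~\ref{L} expanded about $q=-1$ is never actually carried out. (ii) A saddle-point evaluation of the sum over $k$ yields an asymptotic statement as $n\to\infty$, not an inequality valid for \emph{every} positive integer $n$; converting it into \eqref{pDerror} would require explicit remainder bounds uniform in $k$ for all cyclotomic blocks $j\geq 3$, together with a finite check for small $n$, none of which is provided. (iii) The summands carry oscillating factors ($(-1)^n$ and higher roots of unity), so bounding the absolute value of the full sum by a single positive closed-form term requires controlling cancellation and reinforcement across residue classes of $n$; identifying the dominant block asymptotically does not do this, and the paper's own observation that the error spikes when $n$ is one less than a perfect square shows the oscillation is genuinely delicate. (iv) The constant $\tfrac{27}{50}$ is, as you concede, empirically tuned; since the conjectured bound is tight, any proof must produce this constant (or beat it) with exact constants throughout, which a saddle-point heuristic cannot deliver. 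In short, your outline is a plausible and perhaps the natural route toward the conjecture, but it leaves the statement exactly as open as the paper does.
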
 

We also remark that Rademacher, in discussing error estimates
for truncated versions of his series for $p(n)$, offers~\cite[p. 277, Eq. (121.4)]{HR73}, which
implies
\begin{multline} \label{pRerror} 
  | p(n) - p_R(n) | < \frac{10\sqrt{2}}{99\pi} C^3 + \frac{8\sqrt{2}}{11\pi}n^{-3/2} \left(
  \sinh(C\sqrt{n} ) - C\sqrt{n} \right)  \\+ 2 e^{C\sqrt{n-1}} \frac{4}{n-1} \left( \frac{1}{5\sqrt{3}} + 
  \frac{1}{9\pi\sqrt{2}}(n-1)^{-1/2}  \right),
\end{multline} where $C = \pi \sqrt{2/3}$.
Notice that the right hand side of~\eqref{pDerror} is considerably smaller than that of~\eqref{pRerror}.

Further, Table~\ref{tab} suggests that the absolute errors for $p_D(n)$ and $p_R(n)$ are \emph{very} close to one another, and small compared to $p(n)$, and all by using polynomials with rational coefficients, effectively eliminating the need to resort to transcendental numbers and hyperbolic functions!  (See the graphs in Figures 1 and 2.)

\begin{table}[ht]
\caption{Error in approximation of $p(n)$ by $p_D(n)$ and $p_R(n)$ for selected values of $n$}
\begin{center}
\begin{tabular}{| r | r |  r | r | r | } \hline
$n$ & $p(n)$  & $p_D(n) - p(n)$ & $p_R(n) - p(n)$ & $p_R(n) - p_D(n)$ \\ 
\hline
$5$ & $7$ & $\mathbf{0.25}$ & $0.26210$  & $0.01210 $\\
$10$ & $42$ & $-0.37905$ & $\mathbf{-0.37221}$ & $0.00684  $\\
$15$ & $176$ & $\mathbf{0.39120}$  & $0.56047$ & $ 0.16927 $ \\
$20$ & $627$ & $-1.24394$  & $\mathbf{-1.24232}$ & $ 0.00162 $\\
$25$ & $1958$ & $2.10036$ & $\mathbf{2.09834}$&  $-0.00202  $\\
$30$ & $5604$ & $-3.72589$ & $\mathbf{-3.72044}$ &  $ 0.00545 $\\
$40$ & $37{,}338$ & $-7.39250$ & $\mathbf{-7.39081}$ & $ 0.00170 $\\
$50$ & $204{,}226$ & $\mathbf{-14.9227}$ & $-14.9235$& $-0.00080 $\\
$60$ & $966{,}467$ & $\mathbf{-33.6090}$ & $-33.6385$ & $-0.02946 $\\
$75$ & $8{,}118{,}264$ & $\mathbf{79.2210}$ & $79.2222$ & $ 0.00129  $\\
$100$ & $190{,}569{,}292$ & $-347.2173$ & $\mathbf{-347.2167}$ & $0.00069$ \\
$150$ & $40{,}853{,}235{,}313$ & $-4253.1144$ & $\mathbf{-4253.1138} $ & $0.00058$\\
$200$ & $> 3.97\times 10^{12}$ & $-36202.1049$ & $\mathbf{-36202.1042} $
 &$ 0.00062$  \\ 
$300$ & $> 9.25\times 10^{15}$ & $-1442614.889$ & $\mathbf{-1442614.887}$
& $0.00168$\\
$500$ & $>2.30 \times 10^{21}$ & $\mathbf{-560997650.0056}$ &
$-560997650.0066$  & $ -0.00093  $\\
\hline
\end{tabular}
\end{center}
\label{tab}
\end{table}

 We further note that 
numerical evidence suggests that local maxima of $p_R(n) - p_D(n)$ occur when $n$ is one less than a perfect square.  
 
 \begin{figure}[h!]\label{graph5}
 \includegraphics[scale=0.4]{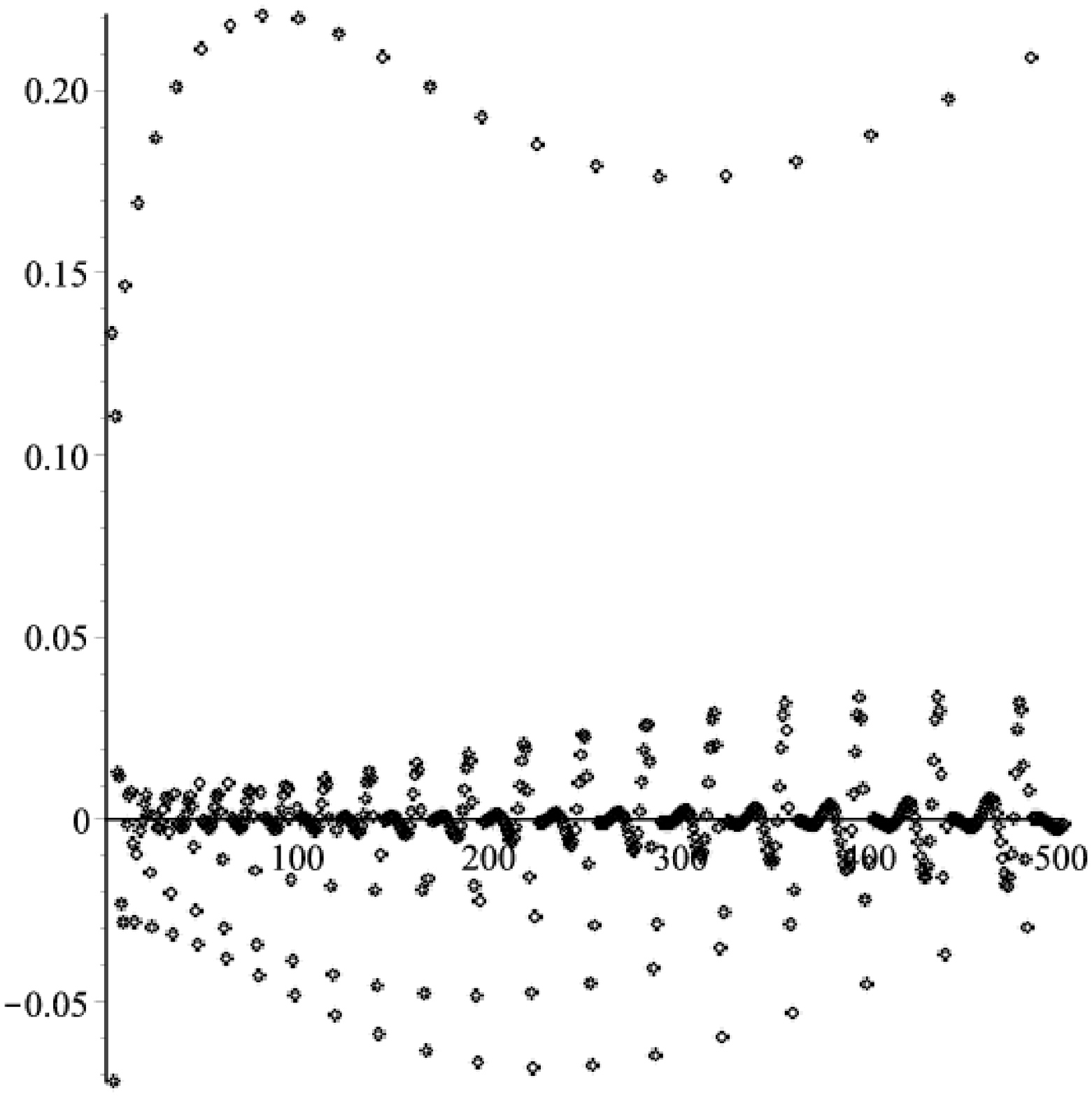}\caption{$p_R(n) - p_D(n)$ for $1\leq n \leq 500$}
\end{figure}

\begin{figure}[h!]\label{graph16}
\includegraphics[scale=0.4]{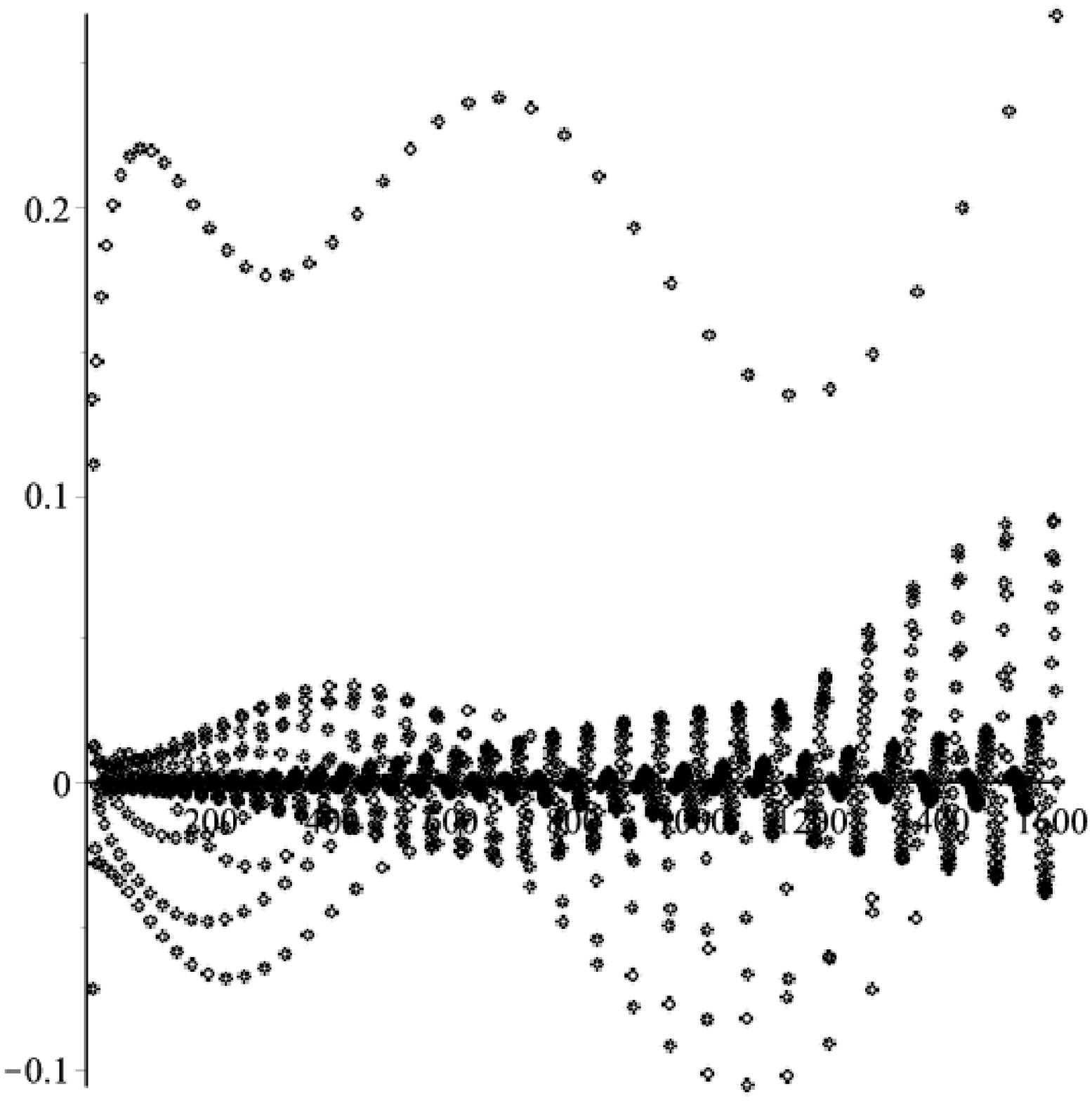}\caption{$p_R(n) - p_D(n)$ for $1\leq n \leq 1600$}
\end{figure}
 
\section{Further related multisum results}
Following the lead suggested by the unexpected appearance of the first Rogers--Ramanujan
identity in Section~\ref{CompMultisum}, we mention several other related results.  Many 
analogous results could be given, but in an effort to keep this article a reasonable length,
we will limit ourselves to three.

Recall the second Rogers--Ramanujan identity~\cite{LJR94}:
\begin{RR2SP}  
\begin{equation} \label{RR2sp}
\sum_{k\geq 0}\frac{q^{k^2+k}}{(1-q)(1-q^2)\cdots(1-q^k)} =
\prod_{j\geq 0} \frac{1}{ (1-q^{5j+2})(1-q^{5j+3} ) },
\end{equation}
\end{RR2SP} and its combinatorial interpretation~\cite[p. 35]{M}.
\begin{RR2C}  For integers $n$, the number of partitions of $n$ into parts which differ from each
other
by at least $2$ and exceed $1$ equals the number of partitions of $n$ into parts
congruent to $\pm 2 \pmod{5}$.
\end{RR2C}
Using reasoning analogous to that of Lemma~\ref{L2}, we obtain the
\begin{proposition}
Let $r_2(n,k)$ denote the number of partitions of $n$ of length $k$ in which all parts exceed 
$1$ and mutually differ by at least $2$.  Then $r_2(n,k)$ equals the number of terms in
\eqref{DnkMultisum} in which $m_k>0$.
\end{proposition}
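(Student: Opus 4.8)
The plan is to mirror the generating-function reasoning behind Lemma~\ref{L2}, but now isolating the contribution of those terms for which $m_k>0$. Recall from the generating-function proof of Lemma~\ref{L1} that the terms of the multisum~\eqref{DnkMultisum} are in bijection with the partitions $\lambda\in\mathcal{P}_{n-k^2,k}$, equivalently with the tuples $(m_1,m_2,\dots,m_k)$ of nonnegative integers satisfying $m_1+2m_2+\cdots+km_k=n-k^2$; here $m_2,\dots,m_k$ are the summation indices, $m_1$ is the resulting nonnegative slack, and each $m_j$ is the multiplicity of the part $j$. Consequently a term has $m_k>0$ precisely when the associated partition of $n-k^2$ contains at least one part equal to $k$.

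First I would form the generating function, in $q$, for the number of terms of~\eqref{DnkMultisum} having $m_k>0$. Imposing $m_k\geq 1$ replaces the factor $1/(1-q^k)$ by $q^k/(1-q^k)$ while leaving the factors $1/(1-q^j)$ for $1\le j\le k-1$ untouched, and the base weight $q^{k^2}$ is common to every term, so this generating function is
\[ \frac{q^{k^2+k}}{(1-q)(1-q^2)\cdots(1-q^k)}. \]
Next I would recognize this as exactly the $k$th summand on the left-hand side of the second Rogers--Ramanujan identity~\eqref{RR2sp}. By the staircase argument underlying~\eqref{RR1term}---now subtracting the minimal admissible partition $(2k,2k-2,\dots,4,2)$, of weight $k^2+k$, from a partition of length $k$ whose parts exceed $1$ and mutually differ by at least $2$---this series is the generating function $\sum_n r_2(n,k)q^n$. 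Comparing the coefficient of $q^n$ on the two sides then yields the proposition.

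The argument is essentially routine, so I do not anticipate a serious obstacle; the one point requiring care is the interpretation in the last step, namely verifying that $q^{k^2+k}/\prod_{j=1}^k(1-q^j)$ enumerates precisely the partitions counted by $r_2(n,k)$. This mirrors the first Rogers--Ramanujan case exactly. Alternatively, one can avoid re-deriving it by invoking the bijection already noted between $\mathcal{P}_{n-k^2,k}$ and the length-$k$, gap-$\geq 2$ partitions of $n$: under that map (conjugate, pad to $k$ parts, and add the odd staircase $2(k-i)+1$), the condition that $\mu$ contain a part equal to $k$ becomes the condition that $\mu'$ have a nonzero $k$th part, which in turn becomes the condition that the smallest part of the image exceed $1$. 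Thus the terms with $m_k>0$ match the partitions counted by $r_2(n,k)$ one-for-one, giving a second route to the same conclusion.
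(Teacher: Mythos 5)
Your proof is correct and takes essentially the same approach as the paper, which disposes of this proposition by invoking ``reasoning analogous to that of Lemma~\ref{L2}'': you make that analogy explicit by noting that terms of~\eqref{DnkMultisum} with $m_k>0$ correspond to partitions of $n-k^2$ (into parts at most $k$) containing a part equal to $k$, whose generating function $q^{k^2+k}/\prod_{j=1}^{k}(1-q^j)$ is exactly the $k$th summand of the second Rogers--Ramanujan identity~\eqref{RR2sp}, i.e., the generating function for the partitions counted by $r_2(n,k)$. The bijective alternative you sketch at the end is a correct and pleasant supplement, but the generating-function argument already matches the paper's intended proof.
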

Of course, summing over $k$ from $1$ to $\lfloor \sqrt{n} \rfloor$ yields the number of
partitions of $n$ enumerated by the second Rogers--Ramanujan identity.

Next we recall perhaps the most famous partition identity of all time:
\begin{EOD}
The number of partitions into distinct parts equals the number of partitions into odd parts.
\end{EOD}

Let $\Delta(n,k)$ denote the number of partitions of $n$ with Durfee square of order $k$ and
all parts distinct.
Then
\begin{theorem}
\begin{equation}
  \Delta(n,k) = \sum_{m_k=0}^{U_k} \sum_{m_{k-1}=1}^{U_{k-1}} 
  \sum_{m_{k-2}=1}^{U_{k-2}} \cdots \sum_{m_2=1}^{U_2}  
 2^{b_1+b_2+\cdots+b_k } \chi(m_1 \neq 0) , \label{MultiSumDistinct}
\end{equation} where
 $U_j$ is defined in~\eqref{Uj}, $\chi(\cdot)$ is the characteristic function,
\begin{equation} \label{m1} m_1 = n - k^2 - \sum_{h=2}^k h m_h, \end{equation}
and
\[  b_i = \begin{cases}  0, & \text{if } m_i = 0\text{ or } 1, \\
      1  ,& \text{if  } m_i > 1\end{cases}.
\] 
\end{theorem}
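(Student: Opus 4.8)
The plan is to adapt the combinatorial proof of Lemma~\ref{L1}. Recall that a partition $\lambda$ of $n$ with Durfee square of order $k$ decomposes as a triple $(\delta,\beta,\rho)$ with $\delta=\langle k^k\rangle$ and $\beta\cup\rho$ a partition of weight $n-k^2$ with no part exceeding $k$; the parts of $\lambda$ are recovered as $\lambda_j = k+\rho'_j$ for $1\le j\le k$ (the $k$ rows meeting the Durfee square, each extended to the right by the columns of $\rho$, where $\rho'$ is the conjugate of $\rho$) together with the parts of $\beta$ (the rows lying below the Durfee square). The first step is to record this identification precisely and to observe, exactly as before, that once the total multiplicities $m_i = m_i(\beta\cup\rho)$ are fixed, the only remaining freedom is the choice of how to split each $m_i$ as $m_i(\beta)+m_i(\rho)$.

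Next I would translate the requirement that $\lambda$ have distinct parts into conditions on this splitting. The $k$ ``upper'' parts $k+\rho'_1 > \cdots > k+\rho'_k$ are distinct precisely when $\rho'_1 > \cdots > \rho'_k$, and since $\rho'_j - \rho'_{j+1} = m_j(\rho)$ this is equivalent to $m_j(\rho)\ge 1$ for $1\le j\le k-1$. The ``lower'' parts, which are just the parts of $\beta$, are distinct precisely when $m_i(\beta)\le 1$ for every $i$. Finally one must forbid a collision between the smallest upper part $k+m_k(\rho)$ and the largest lower part: since all lower parts are $\le k$ and all upper parts are $\ge k$, this is automatic unless $m_k(\rho)=0$, in which case one additionally needs $m_k(\beta)=0$.

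With these constraints in hand, the third step is to count, for fixed $m_1,\dots,m_k$, the number of admissible splittings by treating each index $i$ independently. For $1\le i\le k-1$ the conditions $m_i(\beta)\in\{0,1\}$ and $m_i(\rho)=m_i-m_i(\beta)\ge 1$ give no valid choice when $m_i=0$, exactly one when $m_i=1$, and exactly two when $m_i\ge 2$; for $i=k$ the coupled collision condition gives one choice when $m_k\in\{0,1\}$ and two when $m_k\ge 2$. In every case the count equals $2^{b_i}$, and for $i\le k-1$ it forces $m_i\ge 1$, so the product over $i$ is $2^{b_1+\cdots+b_k}$. Assembling this over all admissible $m_2,\dots,m_k$ --- with $m_1$ determined by the weight via~\eqref{m1}, the upper limits $U_j$ as in~\eqref{Uj}, the positivity $m_j\ge 1$ encoded in the lower summation limits for $2\le j\le k-1$, and $\chi(m_1\ne 0)$ enforcing the same at $i=1$ --- yields exactly~\eqref{MultiSumDistinct}.

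I expect the main obstacle to be the careful bookkeeping at the boundary index $i=k$: unlike the interior indices, the Durfee square itself interacts with both the smallest upper part and the largest lower part, so the clean factor $2^{b_k}$ (with $m_k$ permitted to be $0$) emerges only after the collision condition is handled correctly. Checking the edge cases $m_k=0$ and $m_k=1$ against a small example (for instance, confirming $\Delta(7,2)=3$ against the partitions $(4,3)$, $(5,2)$, $(4,2,1)$) is a useful safeguard that the lone factor $\chi(m_1\ne 0)$, rather than a symmetric treatment of all $k$ indices, is what the distinctness of $\lambda$ actually demands.
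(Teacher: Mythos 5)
Your proof is correct and follows essentially the same route as the paper: both arguments use the $(\delta,\beta,\rho)$ decomposition from the combinatorial proof of Lemma~\ref{L1} and count, for fixed total multiplicities $m_i$, the admissible splittings $m_i = m_i(\beta)+m_i(\rho)$ with $m_i(\beta)\le 1$ and $m_i(\rho)\ge 1$ for $i\le k-1$, yielding the factors $2^{b_i}$ and the restricted summation ranges. The paper merely lists these modifications to \eqref{DnkMultisum} in three terse bullet points, so your explicit handling of the boundary index $i=k$ --- where the collision condition between the smallest upper part $k+m_k(\rho)$ and the largest part of $\beta$, rather than a blanket requirement $m_k(\rho)\ge 1$, is what produces $2^{b_k}$ while still permitting $m_k=0$ --- is actually more careful than the published justification.
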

\begin{proof}
Eq.~\eqref{MultiSumDistinct} was obtained from ~\eqref{DnkMultisum} by eliminating every partition enumerated
by $D(n,k)$ having a repeated part.  To achieve this, the following modifications have been made
to~\eqref{DnkMultisum}:
\begin{enumerate}
   \item Lower bounds for $m_i$, $i=2,3,\dots, k-1$ have been changed from $0$ to $1$, because
$m_i=0$ implies that $\lambda_i = \lambda_{i+1}$.
  \item 
The product $\prod (m_i+1)$ has been replaced by $2^{b_2+\cdots+b_k}$ because $\rho$ must contain at least one part of size $i$ and $\beta$ must contain at most one part of size $i$, where 
the Ferrers graph decomposition
$\lambda \to (\delta, \beta, \rho)$ is as in the combinatorial proof of~Lemma~\ref{L1}.
   \item The factor $(1+n-k^2 - \sum{h m_h})$ has been replaced by $(2^{b_1} \chi(m_1\neq 0))$ to
 account for the necessary restrictions on parts of size $1$.
  \end{enumerate}
\end{proof}

Finally, we note that the similarity between $\mathcal{D}_k$ and 
the general term of another well-known $q$-series,
Ramanujan's third order mock-theta function~\cite{W36}
\begin{equation} \label{MockTheta}
 f(q) = \sum_{k=0}^\infty \frac{q^{k^2}}{(1+q)^2 (1+q^2)^2 \cdots (1+q^k)^2}.
 \end{equation}
Dyson's rank of a partition $\lambda$ is $\lambda_1 - \ell(\lambda)$, the largest part minus
the length~\cite{D44}.
Let $r(n,k)$ denote the coefficient of $q^n$ in the $k$th term of
the right hand side of~\eqref{MockTheta}, the expression 
\[ \frac{q^{k^2}}{(1+q)^2 (1+q^2)^2 \cdots (1+q^k)^2}. \]

The following combinatorial argument shows that $r(n,k)$ is the number of partitions of $n$ with
Durfee square of order $k$ and even rank minus the number of partitions of $n$ with
Durfee square of order $k$ and odd rank:
  Consider an arbitrary partition $\lambda$ of weight $n$.  Decompose 
$\lambda$ into $(\delta,\beta,\rho)$ as in the combinatorial proof of
Lemma~\ref{DnkPtn}.  Note that moving any part between $\beta$ and $\rho$
results in $\lambda'$ having a rank of the same parity as the rank of
$\lambda$.  Therefore, all partitions enumerated by a given product
$\prod(m_i+1)$ have ranks of the same parity.  To establish the parity of 
the rank for a given set $\{ m_1, \dots, m_k \}$, select $\lambda$ such
that the corresponding $\beta$ is the empty partition.  Then the largest
part $\lambda_1$
of $\lambda$ is $\lambda_1 = k + m_1 + m_2 + \cdots + m_k$, and the number
of parts equals $k$.  The result follows, and we have established the 
following
\begin{theorem}
\begin{multline} \label{MockThetaMultisum}
r(n,k) = \sum_{m_k=0}^{U_k} \sum_{m_{k-1}=0}^{U_{k-1}} \cdots \sum_{m_2=0}^{U_2}
 (-1)^{ m_1 + m_2 + m_3 + \cdots + m_k } 
 \prod_{i=1}^k (m_i + 1),
\end{multline}
where $m_1$ is given by~\eqref{m1} and
$U_j$ is given by~\eqref{Uj}.

Thus the coefficient of $q^n$ in $f(q)$~\eqref{MockTheta} is given by
$\sum_{k =1}^{\lfloor \sqrt{n} \rfloor} r(n,k).$
\end{theorem}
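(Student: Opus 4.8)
The plan is to mirror the generating-function proof of Lemma~\ref{L1}, the only essential difference being the sign bookkeeping forced by the plus signs in the denominator of~\eqref{MockTheta}. First I would record the negative-exponent binomial expansion
\[ \frac{1}{(1+q^j)^2} = \sum_{m_j\geq 0} (-1)^{m_j}(m_j+1)\, q^{j m_j}, \]
obtained from $(1-x)^{-2}=\sum_{m\geq 0}(m+1)x^m$ by the substitution $x\mapsto -x$. Multiplying these expansions for $j=1,2,\dots,k$ and prefixing $q^{k^2}$ gives
\[ \frac{q^{k^2}}{(1+q)^2(1+q^2)^2\cdots(1+q^k)^2}
= \sum_{m_1,\dots,m_k\geq 0} (-1)^{m_1+\cdots+m_k}\Big(\prod_{i=1}^k (m_i+1)\Big) q^{k^2 + \sum_{i=1}^k i m_i}, \]
since $\prod_{i=1}^k (-1)^{m_i} = (-1)^{m_1+\cdots+m_k}$.

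Next I would extract the coefficient of $q^n$: the exponent equals $n$ exactly when $\sum_{i=1}^k i m_i = n-k^2$, so
\[ r(n,k) = \sum_{\substack{m_1,\dots,m_k\geq 0\\ \sum_{i=1}^k i m_i = n-k^2}} (-1)^{m_1+\cdots+m_k}\prod_{i=1}^k (m_i+1). \]
The final step is purely organizational and identical to the argument establishing~\eqref{DnkMultisum}: the constraint $\sum_i i m_i = n-k^2$ lets $m_k$ range over $0\leq m_k \leq U_k$, then $m_{k-1}$ over $0\leq m_{k-1}\leq U_{k-1}$, and so on down to $m_2$, after which $m_1$ is forced to the value~\eqref{m1}. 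Substituting these nested bounds turns the single constrained sum into the iterated sum~\eqref{MockThetaMultisum}.

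I expect no serious obstacle here; the proof is a sign variant of one already carried out, and the only delicate point is getting the sign right. As a consistency check it agrees with the combinatorial argument preceding the theorem: there the partitions attached to a fixed multiplicity vector $(m_1,\dots,m_k)$ all share rank parity $(-1)^{m_1+\cdots+m_k}$ (seen by taking $\beta=\varnothing$, whence $\lambda_1-\ell(\lambda)=m_1+\cdots+m_k$), so the signed count ``even rank minus odd rank'' contributes exactly $(-1)^{m_1+\cdots+m_k}\prod_i(m_i+1)$ for that vector---matching the summand above. For the concluding assertion, summing~\eqref{MockThetaMultisum} over $k$ recovers the coefficient of $q^n$ in $f(q)$, and since the $k$th summand of~\eqref{MockTheta} begins at $q^{k^2}$, only those $k$ with $k^2\leq n$, i.e.\ $1\leq k\leq\lfloor\sqrt n\rfloor$, can contribute when $n\geq 1$.
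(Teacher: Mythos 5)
Your proposal is correct and follows essentially the same route as the paper: the formula~\eqref{MockThetaMultisum} comes from the sign-tracking binomial expansion of $q^{k^2}\prod_{j=1}^k(1+q^j)^{-2}$ (the $q\mapsto -q$ variant of the generating-function proof of Lemma~\ref{L1}) followed by the same nested-sum reorganization used to prove~\eqref{DnkMultisum}. The paper's write-up foregrounds the combinatorial rank-parity argument (moving parts between $\beta$ and $\rho$ preserves rank parity, and taking $\beta=\varnothing$ shows the parity is $(-1)^{m_1+\cdots+m_k}$) in order to also give the ``even rank minus odd rank'' interpretation of $r(n,k)$, which you correctly include as a consistency check rather than as the main engine of the proof.
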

\section{Conclusion}
As demonstrated above, while it is tempting to reduce a $(k-1)$-fold multisum to a ``simpler" form, the natural ways to do so brings one to quasipolynomial representations or products of
expressions involving floor functions, which may appear simpler for fixed, numeric $k$ (provided $k$ is sufficiently small) but appear to be more complicated when considering general, symbolic $k$
(or even moderately large numeric $k$).   Furthermore, the quasipolynomial representations obscure the underlying combinatorics, e.g. through the inclusion of 
non-integers and minus signs.  Thus we offer equation~\eqref{DnkMultisum} and its immediate corollary~\eqref{PofN} as truly combinatorial formulas where each term is a positive integer that actually counts partitions.  

  On the other hand, the quasipolynomial representations suggest approximations that are 
simple (in the sense that they are polynomials with rational coefficients), yet striking in their apparent level of accuracy.  We remark that $| p(n) - p_D(n) | < 0.5$ when $n\leq 10$ and when $n= 14, 15$.    The analogous approximation using just the
sum of the quasiperiod $1$ and $2$ parts of $D(n,k)$ approximates $p(n)$ to within $0.5$ of the
true value for all $n\leq 37$ except $n=24$ and $n=36$.   The nearest integer to $\sum_{1\leq k\leq \sqrt{n}} D^{*}(n,k)$, where $D^{*}(n,k)$ is the sum of the parts of $D(n,k)$ of quasiperiod at most $4$, 
suffice to find $p(n)$ exactly for all $n<120$.

\section*{Acknowledgments}
The second author thanks George Andrews and Ken Ono for their interest in this project and for several helpful suggestions, and Marie Jameson for numerous discussions relating to this project.  We also
thank the anonymous referee for very carefully reading our manuscript, and making many helpful suggestions.

\bibliographystyle{amsplain}

\begin{thebibliography}{19}
\bibitem{A76} Andrews, G. E.: {The Theory of Partitions}. Addison--Wesley (1976);  Reissued, Cambridge University Press (1998) 

\bibitem{AE04} Andrews, G. E., Eriksson, K.: {Integer Partitions}. Cambridge University Press (2004)

\bibitem{BR95} Berndt, B. C., Rankin, R.: {Ramanujan: Letters and Commentary}. American Mathematical Society and London Mathematical Society (1995)

\bibitem{BO13} Bruinier, J. H., Ono K.: 
{Algebraic formulas for the coefficients of half-integral weight harmonic weak Maass forms}. {Adv. Math.} 246, 198--219 (2013)

\bibitem{Cetal} Calkin, N. et al.:
Computing the integer partition function,
Math. Comp. 76, 1619--1638 (2007)

\bibitem{D44} Dyson, F. J.:
{Some guesses in the theory of partitions}. {Eureka (Cambridge)} 8, 10--15 (1944)

\bibitem{SZMaple} Ekhad, S. B.: 
{Explicit expressions for the number of partitions with size of Durfee square $k$ for $k$ between 1 and 40}. web book \texttt{http://www.math.rutgers.edu/\~{}zeilberg/tokhniot/oPARTITIONS2}

\bibitem{HR18} Hardy, G. H., Ramanujan, S.: 
{Asymptotic formul\ae\ in combinatory analysis}. 
Proc. London Math. Soc. (2) 17, 75--115 (1918)

\bibitem{H05} Hirsch, J. E.: 
{An index to quantify an individual's scientific research output}. 
Proc. Nat. Acad. Sci. USA 102, 16569--16572 (2005)

\bibitem{L41} Lehner, J.:
{A partition function connected with the modulus five}.
Duke Math. J. 8, 631--655 (1941).

\bibitem{M} MacMahon, P. A.:
{Combinatory Analysis}, Volume 2. Cambridge University Press (1918);  Reissued,
AMS Chelsea (1960)

\bibitem{HR38} Rademacher, H. A.: 
{On the partition function $p(n)$}.
Proc. London Math. Soc. (2) 43, 241--254 (1938)

\bibitem{HR73} Rademacher, H. A.:
{Topics in Analytic Number Theory}. Springer (1973)

\bibitem{LJR94} Rogers, L. J.:
{Second memoir on the expansion of certain infinite products}.
Proc. London Math. Soc. (1) 25, 318--343 (1894)

\bibitem{SZ12} Sills, A. V., Zeilberger, D.: 
{Formul\ae\ for the number of partitions of $n$ into at most $m$ parts}, using the quasipolynomial ansatz. 
Adv. Appl. Math. 48, 640--645 (2012)

\bibitem{SZ13} Sills, A. V., Zeilberger, D.:
{Rademacher's infinite partial fraction conjecture is (almost certainly) false}. 
J. Difference Eq. Appl. 19, 680--689 (2013)

\bibitem{A006918} Sloane, N. J. A.:  
{Online Encyclopedia of Integer Sequence}, sequence A006918 \texttt{oeis.org/A006918}

\bibitem{RS11} Stanley, R. P.: {Enumerative Combinatorics}, Volume 1, Second Edition. Cambridge University Press (2011)

\bibitem{W22} Watson, G. N.: {Treatise on the Theory of Bessel Functions}.
Cambridge University Press (1922)

\bibitem{W36} Watson, G. N.: {The final problem: an account of the mock theta functions},
J. London Math Soc. (1) 11, 55--80 (1936).

\end{thebibliography}

\end{document}